	\let\noindent\empty
\newtheorem{con}{Conjecture}[section]
\newtheorem{defn}[con]{Definition}
\newtheorem{thm}[con]{Theorem}
\newtheorem*{thm*}{Theorem}
\newtheorem{lemma}[con]{Lemma}
\newtheorem{corollary}[con]{Corollary}
\newtheorem{proposition}[con]{Proposition}
\newtheorem*{con*}{Conjecture}
\begin{document}
	\title{ Some results on the Ryser design conjecture-II}
	\author[Tushar D. Parulekar]{Tushar D. Parulekar}
	\address{Department of Mathematics, Indian Institute of Technology Bombay, Powai, Mumbai 400076}
	\email{tushar.p@math.iitb.ac.in}
	\author[Sharad S. Sane]{Sharad S. Sane}
	\address{Chennai Mathematical Institute, SIPCOT IT Park, Siruseri, Chennai 603103}
	\email{ssane@cmi.ac.in}
	\date{\today}
	
	\subjclass[2010]{05B05;    51E05;     62K10}
	
	\keywords{Ryser design, symmetric design, lambda}
	
	\begin {abstract}
	A Ryser design $\mathcal{D}$ on $v$ points is a collection of $v$ proper subsets (called blocks) of a point-set with $v$ points satisfying (i) every two blocks intersect each other in $\lambda$ points for a fixed $\lambda < v$  (ii) there are at least two block sizes. A design $\mathcal{D}$ is called a symmetric design, if all the blocks of $\mathcal{D}$ have the same size
	(or equivalently, every point  has the same replication number) and every two blocks intersect each other in $\lambda$ points. The only known construction of a Ryser design is via block complementation of a symmetric design also known as the Ryser-Woodall complementation method. Such a Ryser design is called a Ryser design of  Type-1. The Ryser-Woodall conjecture states: ``every Ryser design is of Type-1". 
	Main results of the present article are the following. An expression for the inverse of the incidence matrix $\mathsf{A}$ of a Ryser  design is obtained. A necessary condition for the design to be of Type-1 is obtained. A well known conjecture states that, for a Ryser design on  \textit{v}  points $\mbox{ }4\lambda-1\leq v\leq\lambda^2+\lambda+1$.
	A partial support for this conjecture is obtained. Finally, a special case of Ryser designs with two block sizes is shown to be of Type-1.
\end{abstract}
\maketitle
\section{Introduction}
\noindent

Let $X$ be a finite set of points and $L\subseteq P(X)$, where $P(X)$ is the power set of $X$. Then the pair $(X,L)$ is called a design. The elements of $X$ are called its points and the members of $L$ are called the blocks. Most of the definitions, formulas and proofs of standard results used here can be found in \cite{Ion2}.
\begin{defn}{\rm
		A design $\mathcal{D}=(X,L)$ is said to be a \textit{symmetric} $(v,k,\lambda)$ design  if
		\begin{enumerate}[{\rm 1.}]
			\item $ |X|=|L|=v$,
			\item $ |B_1\cap B_2|=\lambda \geq 1$ for all blocks $B_1$ and $B_2$ of $\mathcal{D},~~ B_1\neq B_2$,
			\item $ |B|=k>\lambda$ for all blocks $B$ of $\mathcal{D}$.
		\end{enumerate}
}\end{defn}
\begin{defn}{\rm
		A design $\mathcal{D}=(X,L)$ is said to be a \textit{Ryser design} of order $v$ and index $\lambda$ if it satisfies the following conditions:
		\begin{enumerate}[{\rm 1.}]
			\item $|X|=|L|=v$,
			\item $ |B_1\cap B_2|=\lambda$ for all blocks $B_1$ and $B_2$ of $\mathcal{D}, B_1\neq B_2$,
			\item $ |B|>\lambda$ for all blocks $B$ of $\mathcal{D}$,
			\item there exist blocks $B_1$ and $B_2$ of $\mathcal{D}$ with $|B_1|\neq|B_2|$.
		\end{enumerate}
}\end{defn}
\noindent
Let $\mathcal{D}=(X,\mathcal{A})$ be a symmetric $(v,k,k-\lambda)$ design with $k\neq 2\lambda$. Let $A$ be a fixed block of $\mathcal{D}$.
Form the collection $\,\mathcal{B}=\{A\}\bigcup \{A\triangle B: B\in \mathcal{A}, B\neq A\}$, where $A\triangle B$ denotes the usual symmetric difference of $A$ and $B$. Then $\overline{\mathcal{D}}=(X,\mathcal{B})$ is a Ryser design of order $v$ and index $\lambda$ obtained from $\mathcal{D}$ by block complementation with respect to the block $A$. We denote $\overline{\mathcal{D}}$ by $\mathcal{D}*A$. 
Then $A$ is also a block of $\mathcal{D}*A$ and the original design $\mathcal{D}$ can be obtained by complementing $\mathcal{D}*A$ with respect to the block $A$. 
If $\mathcal{D}$ is a symmetric $(v,k,\lambda^{'})$ design, then the design obtained by complementing $\mathcal{D}$ with respect to some block is a Ryser design of order $v$ with index $\lambda=k-\lambda^{'}$. A Ryser design obtained in this way is said to be of \textit{Type-1}.\\
All the known examples of Ryser designs can be described by the above construction that was given by Ryser.  This construction is also called a Ryser-Woodall complementation or simply a block complementation.\\
Define a Ryser design to be of \textit{Type-2} if it is not of Type-1. The Ryser Design Conjecture states \textit{``Every Ryser design is of Type-1''}.
The conjecture has been proved to be true for various values of $\lambda$ and $v$. 
Ryser and Woodall independently proved the following result:
\begin{thm}[{\cite[Theorem 14.1.2]{Ion2}}{ Ryser-Woodall Theorem}]\label{thm:RyserWoodall}
	If $\mathcal{D}$ is a Ryser design of order $v$, then there exist integers $r_1$ and $r_2$, $r_1\neq r_2$ such that $r_1+r_2=v+1$ and any point occurs either in $r_1$ blocks or in $r_2$ blocks.
\end{thm}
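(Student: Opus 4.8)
The plan is to extract everything from the incidence matrix. Let $\mathsf{A}$ be the $v\times v$ point--block incidence matrix of $\mathcal{D}$ and let $k_B$ be the size of block $B$. Condition (ii) of the definition says precisely that $\mathsf{A}^{\mathsf{T}}\mathsf{A}=D+\lambda J$, where $D=\operatorname{diag}(k_B-\lambda)_B$ and $J$ is the all-ones matrix, and condition (iii) makes every $k_B-\lambda$ a positive integer, so $D$ --- hence $\mathsf{A}^{\mathsf{T}}\mathsf{A}$ --- is positive definite and $\mathsf{A}$ is invertible. (A small preliminary: $\lambda\ge 1$, since $\lambda=0$ would force the $v$ blocks to partition the $v$-set into singletons, contradicting (iv).) Invertibility of $\mathsf{A}$ is the only structural input; everything else is linear algebra.

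Introduce $\tilde D=D^{-1}$, the vector $\mathbf{d}$ with $\mathbf{d}_B=(k_B-\lambda)^{-1}$, the scalar $S=\mathbf{1}^{\mathsf{T}}\mathbf{d}=\sum_B(k_B-\lambda)^{-1}$, and $\mathbf{s}=\mathsf{A}\mathbf{d}$, whose $x$-th entry is the weighted replication number $S_x:=\sum_{B\ni x}(k_B-\lambda)^{-1}$. Two short computations from $\mathsf{A}^{\mathsf{T}}\mathsf{A}=D+\lambda J$ do all the work. First, $\mathsf{A}^{\mathsf{T}}\mathbf{s}=(D+\lambda J)\mathbf{d}=\mathbf{1}+\lambda S\,\mathbf{1}$, i.e. $\sum_{x\in B}S_x=1+\lambda S$ for every block $B$. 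Second, $(\mathsf{A}\tilde D\mathsf{A}^{\mathsf{T}})\mathsf{A}=\mathsf{A}\tilde D(D+\lambda J)=\mathsf{A}+\lambda\mathbf{s}\mathbf{1}^{\mathsf{T}}$; right-multiplying by $\mathsf{A}^{-1}$ and using the first computation to evaluate $\mathbf{1}^{\mathsf{T}}\mathsf{A}^{-1}=\mathbf{s}^{\mathsf{T}}/(1+\lambda S)$ yields the key identity
\[
\mathsf{A}\tilde D\mathsf{A}^{\mathsf{T}}\;=\;I+\frac{\lambda}{1+\lambda S}\,\mathbf{s}\mathbf{s}^{\mathsf{T}}.
\]
As a byproduct one gets the closed form $\mathsf{A}^{-1}=\tilde D\mathsf{A}^{\mathsf{T}}-\frac{\lambda}{1+\lambda S}\mathbf{d}\mathbf{s}^{\mathsf{T}}$, which can also be checked directly.

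From the key identity I would read off two facts. Comparing $(x,x)$-entries gives $S_x=1+\frac{\lambda}{1+\lambda S}S_x^2$, so every $S_x$ is a root of the single quadratic $\lambda t^2-(1+\lambda S)t+(1+\lambda S)=0$; hence $S_x$ takes at most two values over the points. Applying the identity to $\mathbf{1}$, using $\mathsf{A}^{\mathsf{T}}\mathbf{1}=(k_B)_B$, the elementary identity $\sum_x S_x=\sum_B k_B/(k_B-\lambda)=v+\lambda S$, and $\mathsf{A}\mathbf{1}=(r_x)_x$, one obtains the affine relation $r_x=1+\frac{\lambda(v-1)}{1+\lambda S}\,S_x$. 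Therefore $r_x$ also takes at most two values $r_1,r_2$, and since the two roots of the quadratic sum to $(1+\lambda S)/\lambda$, Vieta's relation gives $r_1+r_2=2+\frac{\lambda(v-1)}{1+\lambda S}\cdot\frac{1+\lambda S}{\lambda}=v+1$.

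It remains to exclude the case that only one value actually occurs. If $S_x$ were constant, then $\sum_{x\in B}S_x=1+\lambda S$ for all $B$ would force all $k_B$ equal, contradicting condition (iv); so $S_x$ --- and with it $r_x$ --- genuinely takes two distinct values, which are integers because replication numbers are. I expect the only genuinely delicate step to be isolating the right quantity to track: nonsingularity of $\mathsf{A}$ by itself says nothing about replication numbers, and $r_x$ itself is not directly governed by a clean equation --- it is the weighted version $S_x$ that satisfies the scalar quadratic, after which the affine relation between $r_x$ and $S_x$ (which, unlike several similar-looking manipulations, does not collapse to a triviality) transports the conclusion back to the $r_x$.
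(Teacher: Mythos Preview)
Your argument is correct in every step: the invertibility of $\mathsf{A}$, the derivation of the rank-one identity $\mathsf{A}D^{-1}\mathsf{A}^{\mathsf T}=I+\frac{\lambda}{1+\lambda S}\mathbf{s}\mathbf{s}^{\mathsf T}$, the quadratic for $S_x$, the affine relation $r_x=1+\frac{\lambda(v-1)}{1+\lambda S}S_x$, Vieta giving $r_1+r_2=v+1$, and the exclusion of the constant-$S_x$ case via condition~(iv). Nothing is missing.

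As for comparison with the paper: the paper does \emph{not} supply its own proof of this theorem; it merely cites it as \cite[Theorem~14.1.2]{Ion2}. That said, your argument is precisely the classical Ryser route that the paper invokes as background machinery in Section~2. Your ``key identity'' is exactly Lemma~\ref{lemma:AD^-1A^T}/Corollary~\ref{cor_1} (due to Ryser \cite{Ry2}) read before one knows there are only two replication numbers: once the Ryser--Woodall theorem is in hand, the vector $\mathbf{s}$ has only two distinct entries and $\frac{\lambda}{1+\lambda S}\mathbf{s}\mathbf{s}^{\mathsf T}$ becomes the block matrix $\mathsf{R}$ of Equation~(\ref{MatrixR}). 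So your proof is not a different route but rather the standard one, and it dovetails with---indeed, it logically precedes---the formulas the paper takes for granted.
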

\noindent
Let $\mathcal{D}$ be a Ryser design of order $v$ and index $\lambda$. It is known that a Ryser design has two replication numbers $r_1 > r_2$ with $r_1 + r_2 = v + 1$ such that every point is in either $r_1$ blocks or $r_2$ blocks.  
Following Singhi and Shrikhande {\cite{Sin}} we define $\rho=(r_1-1)/(r_2-1)=c/d$, where $\gcd(c,d)=1.$ 
Let $g=\gcd(r_1-1,r_2-1). \text{ 
	Then } r_1+r_2=v+1 \text{ implies } g \text{ divides } (v-1)$,  
$~~r_1-1=cg,~ r_2-1=dg \text{ and } v-1=(c+d)g$.
We also write $a$ to denote $c - d$ and observe that any two of $c, d$ and $a$ are coprime to each other. 
The point-set is partitioned into subsets $E_1$ and $E_2$, where $E_i$ is the set of points with replication number $r_i$ and let $e_i=|E_i|$ for $ i = 1, 2$. Then $e_1,e_2 > 0$ and $e_1 + e_2 = v$.
For a block $A$, let us denote $|E_i\cap A|$, the number of points of block $A$ with replication number $r_i$ by $\tau_i(A)$, for $i=1,2$.
Then $|A|=\tau_1(A)+\tau_2(A)$. \textit{We say a block $A$ is large, average or small depending on whether $|A|$ is greater than $2\lambda$, equal to $2\lambda$ or less than $2\lambda$ respectively.} 
The Ryser-Woodall complementation of a Ryser design $\mathcal{D}$ of index $\lambda$ with respect to some block $\,A \in \mathcal{D}\,$ is either a symmetric design or a Ryser design of index $(|A|-\lambda)$. If $\mathcal{D}*A$ is the new Ryser design of index $(|A|-\lambda)$ obtained by Ryser-Woodall complementation of a Ryser design $\mathcal{D}$ with respect to the block $A$, we denote the new parameters of $\mathcal{\mathcal{D}*A}$ by $\lambda(\mathcal{D}*A), r_1(\mathcal{D}*A)$ etc.\\
\begin{proposition}[{\cite[Proposition 14.1.7]{Ion2}}] \label{prop:complement-properties}
	Let $\mathcal{D}$ be a Ryser design of Type-2 and let $A$ be a block of 
	$\mathcal{D}$. Then $\mathcal{D}* A$ is a Ryser design with
	$r_1(\mathcal{D}*A)=r_1(\mathcal{D})$ and $\lambda(\mathcal{D}*A)=|A|-\lambda(\mathcal{D})$.
\end{proposition}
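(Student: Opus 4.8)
The plan is to work directly with the symmetric-difference construction, first pinning down the parameters of $\mathcal{D}*A$ and only afterwards using the Type-2 hypothesis to discard the degenerate possibility. First I would record the two intersection identities: for distinct $B_1,B_2\in\mathcal{A}\setminus\{A\}$, splitting a point according to whether it lies in $A$ and applying inclusion--exclusion gives $|(A\triangle B_1)\cap(A\triangle B_2)| = |A|-|A\cap B_1|-|A\cap B_2|+|B_1\cap B_2| = |A|-\lambda$, and likewise $|A\cap(A\triangle B)| = |A|-|A\cap B| = |A|-\lambda$. Hence any two distinct members of $\mathcal{B}$ meet in exactly $\lambda':=|A|-\lambda$ points; moreover $\lambda'\ge 1$, every $A\triangle B$ has size $|A|+|B|-2\lambda>\lambda'$, $|A|>\lambda'$, and the $v$ sets listed in $\mathcal{B}$ are distinct proper subsets of $X$. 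Thus $\mathcal{D}*A$ is a design on $v$ points with $v$ blocks, all of size $>\lambda'$, any two meeting in $\lambda'$ points; so it is either a symmetric $(v,|A|,\lambda')$ design (all block sizes equal) or a Ryser design of index $\lambda'$ (two block sizes present).

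Next I would rule out the symmetric alternative. Since $A\triangle(A\triangle B)=B$, the complementation $\mathcal{D}\mapsto\mathcal{D}*A$ is an involution, so $(\mathcal{D}*A)*A=\mathcal{D}$. If $\mathcal{D}*A$ were a symmetric design, then $\mathcal{D}$ would be its block complement with respect to $A$; as $\mathcal{D}$ is a genuine Ryser design it really does have two block sizes, which is precisely the non-degeneracy condition in the definition of Type-1, so $\mathcal{D}$ would be of Type-1 --- contradicting the hypothesis. Hence $\mathcal{D}*A$ is a Ryser design, and $\lambda(\mathcal{D}*A)=\lambda'=|A|-\lambda(\mathcal{D})$.

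For the replication numbers I would count the blocks of $\mathcal{B}$ through a point $x$. If $x\in A$, then $x\in A\triangle B\iff x\notin B$, so $x$ lies in $A$ together with the $(v-1)-(r_{\mathcal{D}}(x)-1)$ blocks $B\neq A$ avoiding $x$, giving $r_{\mathcal{D}*A}(x)=v+1-r_{\mathcal{D}}(x)$. If $x\notin A$, then $x\in A\triangle B\iff x\in B$, so $r_{\mathcal{D}*A}(x)=r_{\mathcal{D}}(x)$. Since $r_1(\mathcal{D})+r_2(\mathcal{D})=v+1$, in both cases $r_{\mathcal{D}*A}(x)\in\{r_1(\mathcal{D}),r_2(\mathcal{D})\}$. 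By the Ryser--Woodall Theorem applied to the Ryser design $\mathcal{D}*A$, it has two distinct replication numbers whose sum is $v+1$; two distinct values both lying in the two-element set $\{r_1(\mathcal{D}),r_2(\mathcal{D})\}$ must exhaust it, so $r_1(\mathcal{D}*A)=r_1(\mathcal{D})$.

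The intersection identities and the replication count are mechanical bookkeeping. The point that needs care is the middle step: applying the involution identity correctly and being sure that ``block complement of a symmetric design in which two block sizes genuinely occur'' is exactly the definition of Type-1, so that no degenerate sub-case slips through; and --- at the place where the Ryser--Woodall Theorem is invoked --- confirming that both of $r_1(\mathcal{D})$ and $r_2(\mathcal{D})$ actually occur as replication numbers of $\mathcal{D}*A$. Everything else is routine.
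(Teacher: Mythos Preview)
The paper does not supply its own proof of this proposition: it is quoted directly from \cite[Proposition~14.1.7]{Ion2} and used as a black box, so there is nothing in the present paper to compare your argument against. That said, your argument is essentially the standard one and is correct. The intersection computations, the involution $(\mathcal{D}*A)*A=\mathcal{D}$, and the replication count are all fine, and your use of the Type-2 hypothesis to exclude the symmetric case is exactly right (the non-degeneracy condition $k\neq 2\lambda$ in the Type-1 construction is automatic here, since otherwise $(\mathcal{D}*A)*A$ would itself be symmetric, contradicting that $\mathcal{D}$ has two block sizes).

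One small point worth tightening: at the end you flag the need to check that both of $r_1(\mathcal{D})$ and $r_2(\mathcal{D})$ actually occur as replication numbers in $\mathcal{D}*A$. The Ryser--Woodall Theorem as stated in the paper only asserts that every replication number lies in $\{r_1,r_2\}$; it does not literally say both values are attained. The missing step is that a $\lambda$-design with a single replication number is necessarily a symmetric design (this is standard and is implicit in the proof of Ryser--Woodall), so once you have established that $\mathcal{D}*A$ is a genuine Ryser design with two block sizes, two distinct replication numbers are forced. You clearly have this in mind, but it would be cleaner to say it explicitly rather than leave it as a caveat.
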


\begin{thm}[{\cite[Theorem 14.1.17]{Ion2}}] 
	\label{thm:rho<lambda}
	For any Ryser design with block intersection $\lambda>1$ and replication numbers $r_1 \text{ and } r_2, 
	\quad\lambda/(\lambda-1)\leq \rho \leq \lambda~$ and  $~\rho \notin (\lambda-1,\lambda),~$ where $\qquad\rho=(r_1-1)/(r_2-1)$.
\end{thm}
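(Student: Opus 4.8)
The plan is to attach to every block a single linear relation between $\tau_1$ and $\tau_2$, and then to read off all three inequalities from integrality, nonnegativity, and the size bounds $\lambda<|A|<v$. First I would fix a block $A$ and count the flags $(p,B)$ with $p\in A\cap B$, $B\neq A$, in two ways: summing over $p\in A$ gives $\sum_{p\in A}(r(p)-1)=(v-1)\lambda$, i.e. $(r_1-1)\tau_1(A)+(r_2-1)\tau_2(A)=(v-1)\lambda$; dividing by $g$ and using $r_1-1=cg$, $r_2-1=dg$, $v-1=(c+d)g$ gives
\[
c\,\tau_1(A)+d\,\tau_2(A)=(c+d)\lambda .
\]
As $\gcd(c,d)=1$, this forces an integer $t_A$ with $\tau_1(A)=\lambda+d\,t_A$, $\tau_2(A)=\lambda-c\,t_A$, hence $|A|=2\lambda-a\,t_A$, so $A$ is large, average, or small according as $t_A<0$, $=0$, or $>0$; and $\tau_1(A),\tau_2(A)\ge0$ forces $-\lambda/d\le t_A\le\lambda/c$ for every block.

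The easy case is when some block $A$ is small, i.e. $t_A\ge1$. Then $c\le c\,t_A\le\lambda$, so $\rho=c/d\le c\le\lambda$; and $|A|>\lambda$ gives $a\,t_A<\lambda$, hence $a\le\lambda-1$, so $d=c-a\le\lambda-a$, and since $\lambda(a-1)\ge0$ we get $d\le a(\lambda-1)$, i.e. $\rho=1+a/d\ge\lambda/(\lambda-1)$. (For instance the complement of a block of a projective plane of order $\lambda$ is of this kind and attains $\rho=\lambda$.)

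The substantial case is when no block is small, i.e. $t_A\le0$ for all $A$. Summing the block relation over all $v$ blocks and using $\sum_A\tau_i(A)=e_ir_i$ gives $e_1r_1=v\lambda+d\sum_At_A\le v\lambda$; since $e_1$ is forced by $e_1\,a=(c+d)\lambda-d\,r_2$, this is equivalent to the clean inequality $r_1r_2\ge\lambda(v+1)$. I would combine this with $|A_0|<v$ for a block $A_0$ of maximum size (so $2\lambda+a\,|t_{A_0}|\le v-1$) and with the integrality of $\tau_1(A_0),\tau_2(A_0)$ to constrain $c,d,g$. When $\mathcal D$ is of Type-1 there is a shortcut: $\mathcal D$ comes from a symmetric $(v,m+n,m)$ design with $m\neq n$ and $n=\lambda$, whose complementary symmetric design has index $n(n-1)/m\ge1$; so $m\le n(n-1)$, and since $\rho\in\{m/n,\,n/m\}$ one obtains $\rho\in[\lambda/(\lambda-1),\lambda-1]\cup\{\lambda\}$ at once. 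When $\mathcal D$ is of Type-2, Proposition~\ref{prop:complement-properties} lets one pass to a complementary Ryser design with the same $\rho$, but since complementation never lowers $\lambda$ in the no-small-block situation, closing the bound here requires forcing a contradiction from $r_1r_2\ge\lambda(v+1)$, $2\lambda+a\,|t_{A_0}|\le v-1$, and the integrality conditions, for every value of $g$.

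Finally, for $\rho\notin(\lambda-1,\lambda)$: that range means $d(\lambda-1)<c<d\lambda$, which with $\lambda\ge2$ forces $d\ge2$ (else no integer $c$) and $c>d(\lambda-1)\ge2(\lambda-1)\ge\lambda$, so again every $t_A\le0$, and then the integrality of $\tau_1,\tau_2$ on the extremal blocks together with $v-1=(c+d)g$ is incompatible with $c$ lying strictly between $d(\lambda-1)$ and $d\lambda$. The main obstacle throughout is the no-small-block case: once every block has size at least $2\lambda$ no single block gives a sharp inequality, and for large $g$ even $|A_0|<v$ is nearly vacuous, so everything rests on the global identities — the relation above together with $e_1r_1+e_2r_2=\sum_B|B|$ — and, for Type-2 designs, on the finer combinatorics of $\lambda$-designs; this is the real content of the cited theorem.
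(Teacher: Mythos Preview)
The paper does not prove this theorem at all: it is quoted from \cite{Ion2} and used as a black box in the proof of Theorem~\ref{thm:result3}. So there is no ``paper's own proof'' to compare against, and your attempt must stand on its own.

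Your treatment of the case where a small block exists is correct and in fact already yields all three assertions: from a small block $A$ (your $t_A\ge 1$) you get $c\le\lambda$, $a\le\lambda-1$, and hence both $\rho\le\lambda$ and $\rho\ge\lambda/(\lambda-1)$; moreover $c\le\lambda$ together with $\rho>\lambda-1$ forces $d=1$ and $c=\lambda$, so $\rho=\lambda$ rather than $\rho\in(\lambda-1,\lambda)$.

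The genuine gap is the ``no small block'' case, and you are right that this is where the content lies. Such designs exist: complementing the symmetric $(13,9,6)$ design with respect to a block gives a Ryser design with $\lambda=3$, one large block of size $9$, and twelve average blocks of size $6$ --- no small block at all. Your Type-1 shortcut, incidentally, is not quite complete as written: from $m\le n(n-1)$ you only get $\rho=m/n\le\lambda-1$, and the lower bound $\rho\ge\lambda/(\lambda-1)$ in the $m>n$ regime needs the divisibility $m\mid\lambda(\lambda-1)$ (coming from integrality of $v$), not merely the inequality. More importantly, for Type-2 designs you explicitly leave the argument unfinished: the identities $r_1r_2\ge\lambda(v+1)$ and $|A_0|\le v-1$ are correct but you do not show how to extract $\rho\le\lambda$ from them, and complementation (Proposition~\ref{prop:complement-properties}) only increases $\lambda$ here, so it weakens rather than sharpens the target inequality. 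As you yourself say, this is ``the real content of the cited theorem,'' and your proposal does not supply it.
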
    
\noindent
Ionin and Shrikhande \cite{Ion1} made the following conjecture.
\begin{con}\label{con:bound_on_v}
	For any Ryser design on  \textit{v}  points $\mbox{ }4\lambda-1\leq v\leq\lambda^2+\lambda+1$.
\end{con}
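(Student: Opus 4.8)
The plan is to reduce the Ryser-design axioms to rigid arithmetic constraints on block sizes and then handle Type-1 and Type-2 designs separately; for Type-1 designs both inequalities should come out quickly, while for Type-2 designs the lower bound is still reachable but the upper bound is the genuine obstacle and I expect only conditional progress — this being the ``partial support'' of the abstract. The arithmetic engine is the flag count: fixing a block $A$ and counting pairs $(x,B)$ with $x\in A\cap B$, $B\ne A$, gives $\sum_{x\in A}r(x)=|A|+(v-1)\lambda$, hence $\tau_1(A)(r_1-1)+\tau_2(A)(r_2-1)=\lambda(v-1)$. Since $v-1=(r_1-1)+(r_2-1)$ (Theorem \ref{thm:RyserWoodall}) and $r_1-1=cg$, $r_2-1=dg$ with $\gcd(c,d)=1$, this rearranges to $(\tau_1(A)-\lambda)\,c=(\lambda-\tau_2(A))\,d$, so there is an integer $t_A$ with $\tau_1(A)=\lambda+d\,t_A$, $\tau_2(A)=\lambda-c\,t_A$, $|A|=2\lambda-(c-d)\,t_A$; moreover $\tau_i(A)\ge 0$ and $|A|>\lambda$ force $-\lambda/d\le t_A<\lambda/(c-d)$, and $A$ is large, average, small according as $t_A<0$, $=0$, $>0$. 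Summing against $\sum_A\tau_i(A)=e_ir_i$ gives the companion identities $d\sum_A t_A=e_1r_1-v\lambda$ and $c\sum_A t_A=v\lambda-e_2r_2$. These, together with $v-1=(c+d)g$ and the constraint $\lambda/(\lambda-1)\le c/d\le\lambda$, $c/d\notin(\lambda-1,\lambda)$ of Theorem \ref{thm:rho<lambda}, are the tools for the Type-2 analysis.

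For a Type-1 design — say the Ryser--Woodall complement of a symmetric $(v,k,\lambda')$ design with $\lambda=k-\lambda'$ and $k\ne 2\lambda$ — all $v-1$ complemented blocks have size $2\lambda$ and $v-1=k(k-1)/\lambda'=(\lambda+\lambda')(\lambda+\lambda'-1)/\lambda'$. For the lower bound, minimize $f(\lambda')=(\lambda+\lambda')(\lambda+\lambda'-1)/\lambda'$ over positive integers $\lambda'\ne\lambda$: the real minimum is at $\lambda'=\sqrt{\lambda(\lambda-1)}$, strictly between the integers $\lambda-1$ and $\lambda$, both of which return $f=4\lambda-2$, while every other integer returns more; since $\lambda'=\lambda$ is excluded, $v-1\ge 4\lambda-2$. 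For the upper bound (taking $\lambda\ge 2$), reducing $(\lambda+\lambda')(\lambda+\lambda'-1)\bmod\lambda'$ shows $\lambda'\mid\lambda(\lambda-1)$, so $\lambda'\le\lambda(\lambda-1)$ and $k=\lambda+\lambda'\le\lambda^2$; as also $k\ge\lambda+1$, and the quadratic $k^2-(1+\lambda+\lambda^2)k+\lambda^2(\lambda+1)$, whose roots are $\lambda+1$ and $\lambda^2$, is non-positive on $[\lambda+1,\lambda^2]$, one obtains $v-1=k(k-1)/\lambda'\le\lambda^2+\lambda$. Thus Conjecture \ref{con:bound_on_v} holds for every Type-1 Ryser design; in particular it is consistent with the Ryser--Woodall conjecture, and what remains is the Type-2 case.

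For a Type-2 design I would first use Proposition \ref{prop:complement-properties} to complement with respect to a block of size $\ne 2\lambda$ and so assume the existence of a large block $A$. If some other block $B$ is large or average then $v\ge|A\cup B|=|A|+|B|-\lambda>3\lambda$, giving $v\ge 3\lambda+1$ immediately. To push this to $v\ge 4\lambda-1$ I would feed it back into the parametrization: a large block satisfies $2\lambda+(c-d)\le|A|\le\lambda(c+d)/d$ (the upper bound from $t_A\ge-\lambda/d$), and combining $|A|\le v-1=(c+d)g$ with $c/d\ge\lambda/(\lambda-1)$ (Theorem \ref{thm:rho<lambda}) and the companion identities — which tie $e_1,e_2>0$ and $\sum_A t_A$ together — I expect to force $(c+d)g\ge 4\lambda-2$ outside a short list of small-parameter exceptions that can be eliminated by hand. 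The case where $\mathcal{D}$ has a large block but no other large or average block (all others small) would be handled via the sign information $t_A<0<t_B$ for $B\ne A$ inside the same identities.

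The upper bound $v\le\lambda^2+\lambda+1$ for Type-2 designs is where I expect to get stuck, or to need an extra hypothesis, so it is the main obstacle. The Type-1 route is closed because by Proposition \ref{prop:complement-properties} a Type-2 design has no symmetric-design ancestor. The best unconditional leverage seems to be $c/d\le\lambda$ together with $v-1=(c+d)g$, which turns the problem into bounding $g$; for that I would play the large-block size bound $|A|\le\lambda(c+d)/d$ against $|A|\le v-1$, and add a Bruck--Ryser--Chowla-type integrality condition extracted from the explicit formula for $\mathsf{A}^{-1}$ proved earlier in the paper — concretely that $\det(\mathsf{A})^2$, which equals $\prod_i(k_i-\lambda)^{b_i-1}$ times the determinant of the $2\times 2$ matrix induced on the block-size classes, is a perfect square. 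The difficulty is that all of these are per-block or per-class conditions while $v$ is a global invariant, so I would expect the clean statement to require restricting to Ryser designs with only two block sizes (which also feeds the final theorem of the paper), or to the extreme ratio $\rho=\lambda$, where the estimates above quickly force $v=\lambda^2+\lambda+1$ and then $\mathcal{D}$ to be the complement of a projective plane of order $\lambda$. Proving the upper bound in one of these restricted forms, alongside the lower bound, is the partial support for Conjecture \ref{con:bound_on_v} that I would aim to record.
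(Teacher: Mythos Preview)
The statement is a \emph{conjecture} in the paper, not a theorem; the paper does not prove it. Its partial support is Theorem~\ref{thm:result3}: $D\le -1$ implies $v\ge 4\lambda-1$, and $D\ge 0$ implies $v\le\lambda^2+\lambda+1$, where $D=e_1-r_2$ is the Seress parameter. So there is no ``paper's own proof'' to match, only partial progress to compare against.

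Your decomposition is by Type-1 versus Type-2; the paper's is by the sign of $D$. Your Type-1 argument --- optimizing $v-1=(\lambda+\lambda')(\lambda+\lambda'-1)/\lambda'$ over admissible integers $\lambda'\ne\lambda$ --- is correct and establishes \emph{both} inequalities for every Type-1 design. This is strictly more than Theorem~\ref{thm:result3} delivers in the Type-1 case: since Type-1 means $D\in\{-1,0\}$, the paper's theorem yields only the lower bound when $D=-1$ and only the upper bound when $D=0$, never both at once. Conversely, the paper's route is short and uniform across Types: from equations~(\ref{e1Dform}) and~(\ref{e2Dform}) together with $\rho=(v-1+r)/(v-1-r)$ one obtains the closed form $(v-2\lambda)^2=(2\lambda-1)^2+(r-1)^2-4Dr-1$; then $D\le -1$ gives $(v-2\lambda)^2\ge(2\lambda-1)^2$ and hence the lower bound, while for $D\ge 0$ the bound $\rho\le\lambda$ of Theorem~\ref{thm:rho<lambda} plugged into the quadratic~(\ref{root}) yields the upper bound after a one-line simplification. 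Your Type-2 discussion does not reach a comparable conclusion: the union estimate gives only $v\ge 3\lambda+1$, and the remainder (``feed it back into the parametrization'', Bruck--Ryser--Chowla-type integrality from $\mathsf{A}^{-1}$) is a programme rather than a proof, so it does not recover the Type-2 content of Theorem~\ref{thm:result3}. In short, your split buys a clean, complete resolution of the Type-1 case that the paper does not record; the paper's split buys concise conditional bounds that apply to Type-2 designs as well.
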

\begin{thm}[{\cite[Theorem 9]{Kram}}] \label{Thm:Type-1_iff_2columnsum_1occurs_exactly_once}
	A Ryser design $\mathcal{D}$ is of Type-1 if and only if $\mathcal{D}$ has two column sums one of which occurs exactly once.
\end{thm}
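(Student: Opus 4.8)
My plan is to prove the equivalence by handling the two directions separately. The ``only if'' direction is immediate from the definition of Type-1, while for the ``if'' direction I would isolate a single linear-algebra fact about the incidence matrix that pins down the two replication classes, after which a short double count finishes the argument.

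\emph{The ``only if'' direction.} Suppose $\mathcal{D} = \mathcal{S}*A$, where $\mathcal{S}$ is a symmetric $(v,k,k-\lambda)$ design with $k\neq 2\lambda$ and $A$ is a block of $\mathcal{S}$. Every block of $\mathcal{D}$ is either $A$, of size $k$, or of the form $A\triangle B$ with $B\neq A$ a block of $\mathcal{S}$, of size $|A|+|B|-2|A\cap B| = 2k - 2(k-\lambda) = 2\lambda$. Thus the column sums of the incidence matrix of $\mathcal{D}$ take exactly the two distinct values $k$ and $2\lambda$, and $k$ is attained exactly once.

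\emph{The ``if'' direction.} Suppose the incidence matrix $\mathsf{A}$ of $\mathcal{D}$ has exactly two column sums, with $k_0$ occurring once (say for the block $A_0$) and $m$ occurring $v-1$ times (for the blocks $B_1,\dots,B_{v-1}$). First I would use $\mathsf{A}^{T}\mathsf{A} = \lambda J + D$, where $D = \operatorname{diag}(|B|-\lambda)$ is positive definite, to conclude that $\mathsf{A}$ is nonsingular, so its columns $\mathbf{a}_0, \mathbf{a}_{B_1},\dots,\mathbf{a}_{B_{v-1}}$ (the characteristic vectors of the blocks as subsets of the point set) form a basis. The key step is to show that the replication vector $\mathbf{r} := \mathsf{A}\mathbf{1}$ lies in $\operatorname{span}(\mathbf{1}, \mathbf{a}_0)$: one checks that $\mathbf{r}$ is orthogonal to each of the $v-2$ linearly independent differences $\mathbf{a}_{B_i}-\mathbf{a}_{B_1}$ $(2\le i\le v-1)$, using the identity for $\mathsf{A}^{T}\mathsf{A}$ together with the fact that the equalities $|B_i| = |B_1| = m$ make the corresponding diagonal entries of $D$ agree; since $\mathbf{1}$ and $\mathbf{a}_0$ are also orthogonal to these differences and span their $2$-dimensional orthogonal complement, $\mathbf{r}$ is a linear combination of $\mathbf{1}$ and $\mathbf{a}_0$. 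Hence $\mathbf{r}$ is constant on $A_0$ and constant on $X\setminus A_0$, so by Theorem~\ref{thm:RyserWoodall} the two replication classes are exactly $A_0$ and $X\setminus A_0$; write $r_a$ for the replication number of the points of $A_0$, so that the points off $A_0$ have replication $v+1-r_a$.

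\emph{Forcing $m = 2\lambda$ and concluding.} Now apply the standard flag count $\sum_{p\in B}r_p = |B| + \lambda(v-1)$ to the block $B = B_i$. Since $|A_0\cap B_i| = \lambda$, the block $B_i$ has $\lambda$ points of replication $r_a$ and $m-\lambda$ points of replication $v+1-r_a$, so $\lambda r_a + (m-\lambda)(v+1-r_a) = m + \lambda(v-1)$; this simplifies to $r_a(2\lambda - m) = v(2\lambda - m)$. Hence either $m = 2\lambda$, or $r_a = v$ — and $r_a = v$ would force $A_0$ to lie in every block, contradicting $|A_0\cap B_1| = \lambda < k_0$. Therefore $m = 2\lambda$, so every block of $\mathcal{D}*A_0$ has size $k_0$ and every two of its blocks meet in $k_0 - \lambda$ points; thus $\mathcal{D}*A_0$ is a symmetric $(v,k_0,k_0-\lambda)$ design and $\mathcal{D} = (\mathcal{D}*A_0)*A_0$ is of Type-1. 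I expect the one genuinely nonroutine point to be the observation $\mathbf{r}\in\operatorname{span}(\mathbf{1},\mathbf{a}_0)$: it is precisely the hypothesis that one block size occurs exactly once that leaves $D$ with all but one diagonal entry equal, and once the replication classes are identified with $A_0$ and its complement the remaining steps are elementary counting.
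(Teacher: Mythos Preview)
The paper does not supply its own proof of this theorem: it is quoted as \cite[Theorem 9]{Kram} and used later as a black box, so there is nothing to compare your argument against line by line. That said, your proof is correct and self-contained. The linear-algebra step---that $\mathbf r=\mathsf A\mathbf 1$ is orthogonal to each $\mathbf a_{B_i}-\mathbf a_{B_1}$ because $\mathsf A^{T}\mathsf A=\lambda J+D$ and the relevant diagonal entries of $D$ coincide---is exactly the kind of argument Kramer uses, and your identification of the orthogonal complement with $\operatorname{span}(\mathbf 1,\mathbf a_0)$ is clean (you implicitly use that $A_0$ is a proper nonempty subset, which follows from $\lambda<|A_0|$ and $|A_0\cap B_1|=\lambda<|B_1|$). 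The subsequent flag count and the complementation check that $\mathcal D*A_0$ is a symmetric $(v,k_0,k_0-\lambda)$ design are routine and accurate; note that $k_0\neq 2\lambda$ is automatic since $k_0\neq m=2\lambda$.
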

\noindent
In \cite{Ser2} Seress introduced the term $D=e_1-r_2=r_1-e_2-1$ and proved the following result.
\begin{thm}\label{thm:D=0D=-1}
	A Ryser design is of Type-1 if and only if $~~ D=0 ~~\text{ or }~~ D=-1.$
\end{thm}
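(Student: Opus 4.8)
The plan is to prove the two implications separately. The direction ``Type-1 $\Rightarrow D\in\{0,-1\}$'' is a direct computation with the complementation construction: if $\mathcal{D}=\mathcal{D}_0*A_0$ for a symmetric $(v,k,k-\lambda)$ design $\mathcal{D}_0$ and a block $A_0$ of it, then reading the replication numbers of $\mathcal{D}$ off the list $\{A_0\}\cup\{A_0\triangle B: B\in\mathcal{D}_0,\ B\neq A_0\}$ shows that a point of $A_0$ lies in $1+(v-k)$ blocks of $\mathcal{D}$ and a point outside $A_0$ lies in $k$ blocks. Hence $\{r_1,r_2\}=\{k,\,v-k+1\}$, $\{E_1,E_2\}=\{A_0,\,X\setminus A_0\}$, and $k\neq v-k+1$ because $r_1\neq r_2$. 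If $k>v-k+1$ one gets $E_1=X\setminus A_0$, $e_1=v-k$, $r_2=v-k+1$, so $D=-1$; if $k<v-k+1$ one gets $E_1=A_0$, $e_1=k$, $r_2=k$, so $D=0$.

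For the converse I would reduce everything to the following claim: \emph{if $D=0$ then $E_1$ is a block of $\mathcal{D}$, and if $D=-1$ then $E_2$ is a block of $\mathcal{D}$.} Granting this, say $D=0$ and $E_1\in\mathcal{D}$; then every other block $B$ meets the block $E_1$ in exactly $\lambda$ points, so $\tau_1(B)=\lambda$, and the identity $c\,\tau_1(B)+d\,\tau_2(B)=\lambda(c+d)$ (a consequence of the flag count $\sum_{p\in B}(r_p-1)=\lambda(v-1)$ together with $r_1-1=cg$, $r_2-1=dg$, $v-1=(c+d)g$) forces $\tau_2(B)=\lambda$ and hence $|B|=2\lambda$. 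So $\mathcal{D}$ has exactly two block sizes, $e_1$ (occurring once) and $2\lambda$, with $e_1\neq2\lambda$ (else every block would have size $2\lambda$ and $\mathcal{D}$ would be symmetric), and Theorem~\ref{Thm:Type-1_iff_2columnsum_1occurs_exactly_once} gives that $\mathcal{D}$ is of Type-1; equivalently, $\mathcal{D}*E_1$ is then the symmetric $(v,e_1,e_1-\lambda)$ design. The case $D=-1$ is the mirror image, complementing at $E_2$.

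It remains to prove the claim, which is the heart of the argument; I describe the case $D=0$, the other being symmetric. First, a short computation extracts the arithmetic consequences: from the double count $\sum_p r_p(r_p-1)=\lambda v(v-1)$ (triples $(p,B,C)$ with $B\neq C$ and $p\in B\cap C$), rewritten using $r_1-1=cg$ and $r_2-1=dg$, together with the relations $e_1=r_2$ and $e_2=r_1-1$ coming from $D=0$, one obtains $c\,r_2=\lambda(c+d)$, hence $c\mid\lambda$ and $e_1=r_2=\lambda(c+d)/c$; and from $\sum_B\tau_1(B)=r_1e_1$ one gets $\sum_B t_B=\lambda/c$, where $t_B\in\mathbb{Z}$ is defined by $\tau_1(B)=\lambda+dt_B$ and $\tau_2(B)=\lambda-ct_B$. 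Since $\tau_1(B)\le e_1$ for every block $B$, we have $t_B\le\lambda/c$ always, and equality forces $E_1\subseteq B$ and $B\cap E_2=\emptyset$, i.e.\ $B=E_1$. Thus the claim is equivalent to the assertion that the bound $\max_B t_B=\lambda/c$ is actually attained.

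Attaining this maximum is where I expect the main difficulty: the mean condition $\sum_B t_B=\lambda/c$ does not on its own force a single block to realize $t_B=\lambda/c$. I would try to close the gap in one of two ways: (i) a sharper double count, bounding the number of ``small'' blocks (those with $t_B>0$) and the total excess $\sum_{t_B>0}t_B$ against the number of points of $E_1$ that such blocks can jointly cover, thereby concentrating all the positive mass on a single block; or (ii) induction on $\lambda$ via Ryser--Woodall complementation, using Proposition~\ref{prop:complement-properties}: if $\mathcal{D}$ were of Type-2 then every $\mathcal{D}*A$ is a Ryser design with the same $v,r_1,r_2$ (hence the same $c,d,g$) and $\lambda(\mathcal{D}*A)=|A|-\lambda$, which yields the transformation rule $D(\mathcal{D}*A)=D(\mathcal{D})-(c+d)\,t_A$; one would then push $D$ outside its a~priori range $-(r_2-1)\le D\le r_1-2$, invoking Theorem~\ref{thm:rho<lambda} to constrain the admissible values of $t_A$. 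The most delicate situation is a hypothetical Type-2 design possessing no block of size $2\lambda$, where complementation does not obviously simplify the block-size pattern.
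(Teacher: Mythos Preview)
The paper does not give its own proof of this theorem: it is quoted as a result of Seress (see the sentence just before the statement, citing \cite{Ser2}), so there is no in-paper argument to compare against. On its own merits, your forward implication is correct and complete. Your reduction of the converse to the claim ``$D=0\Rightarrow E_1$ is a block'' (and symmetrically for $D=-1$) is also correct: the identities $c\,r_2=\lambda(c+d)$, $\sum_B t_B=\lambda/c$, the bound $t_B\le\lambda/c$ with equality iff $B=E_1$, and the transformation law $D(\mathcal D*A)=D-(c+d)\,t_A$ all check out, and the deduction of Type-1 from ``$E_1$ is a block'' via Theorem~\ref{Thm:Type-1_iff_2columnsum_1occurs_exactly_once} is fine.

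The genuine gap is exactly where you place it, and neither of your proposed closures works as written. Strategy~(i) is only a wish: the mean condition $\sum_B t_B=\lambda/c$ together with $t_B\le\lambda/c$ imposes no concentration on a single block, and you give no further inequality to force it. Strategy~(ii) cannot produce a contradiction by pushing $D$ out of range, because for a Type-2 design every $\mathcal D*A$ is again a Ryser design with the same $r_1,r_2$, so the new $D$ automatically satisfies $-(r_2-1)\le D\le r_1-2$; nothing is ever violated. Invoking Theorem~\ref{thm:rho<lambda} after complementation does yield the side constraint $|A|-\lambda\ge\rho$ for every block $A$, but you have not shown how this (or any iteration of complementations) forces a block equal to $E_1$. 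This missing step is precisely the content of Seress's theorem in \cite{Ser2}, and its proof there is substantially more delicate than flag-counting; as it stands, your converse is unproved.
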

\noindent
We use the following equations which can be found in \cite{Sin} and \cite{Ion1}.
In a Ryser design with block sizes $ k_1,k_2,\ldots,k_v$  
\begin{equation}\label{sum_of_all}
\sum_{m=1}^{v}\frac{1}{k_m-\lambda}=\frac{(\rho +1)^2}{\rho}-\frac{1}{\lambda}
\end{equation}
\begin{align}
&(\rho-1)e_1=\lambda(\rho + 1)-r_2 \label{e1form}\\
&e_1=\lambda + \frac{\lambda+D}{\rho} \label{e1Dform}\\
&(\rho-1)e_2=\rho r_1-\lambda(\rho + 1) \label{e2form}\\
\intertext{and}
&e_2= \lambda +[\lambda -(D+1)]\rho \label{e2Dform}.\\
\intertext{From Equations (\ref{e2form}) and (\ref{e1form}) we get,}
&r_1 = 2\lambda + \left(\frac{a}{c}\right)(e_2-\lambda) \label{r1form}\\
&r_2=2\lambda-\left(\frac{a}{d}\right)(e_1-\lambda) \label{r2form}
\end{align}
\noindent
\begin{equation}\label{e1rhopluse2byrho_this_ch}
1 +\rho e_1+\dfrac{e_2}{\rho}= \lambda\dfrac{(\rho +1)^2}{\rho}.
\end{equation}
Using a simple two way counting we get,
\begin{equation}\label{tau1r1-1+tau2r2-1}
(r_1-1)\tau_1(A)+(r_2-1)\tau_2(A)=\lambda(v-1)
\end{equation}
which implies
\begin{equation}\label{tau1rhotau2form}
\rho\tau_1(A)+\tau_2(A)=\lambda (\rho + 1).
\end{equation}
\noindent
After dividing Equation (\ref{tau1r1-1+tau2r2-1}) by $g$, the common gcd of  $r_1 -1, r_2 - 1$ and $v - 1$ and using the coprimality of $c$ and $d$ we get
\begin{align*}
&\tau_1(A)=\lambda-td\\
&\tau_2(A)=\lambda+tc\\
&|A|=2\lambda+ta
\end{align*}
for some integer $t$. Hence we get the following lemma:
\begin{lemma}\label{lemma:blocksize}
	Let $A$ be any block of a Ryser design. Then the size of $A$ has the form  $|A|=2\lambda+ta$, where $t$ is an integer. The block $A$ is large, average or small depending on whether $t>0,  t=0$ or $t<0$ respectively. Hence $\tau_1(A)=\tau_2(A)=\lambda$ if $A$ is an average block,  $\tau_1(A)>\lambda>\tau_2(A)$ if $A$ is a small block and  $\tau_2(A)>\lambda>\tau_1(A)$ if $A$ is a large block.
\end{lemma}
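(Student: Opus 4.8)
The plan is to read the claim off the two-way count recorded in Equation~(\ref{tau1r1-1+tau2r2-1}) together with the arithmetic conventions set up in the preceding paragraph. Write $r_1-1=cg$, $r_2-1=dg$ and $v-1=(c+d)g$ with $\gcd(c,d)=1$, and divide Equation~(\ref{tau1r1-1+tau2r2-1}) by $g$ to obtain
\[
c\,\tau_1(A)+d\,\tau_2(A)=\lambda(c+d),
\]
which rearranges to $c\bigl(\lambda-\tau_1(A)\bigr)=d\bigl(\tau_2(A)-\lambda\bigr)$. Since $\gcd(c,d)=1$, the factor $d$ must divide $\lambda-\tau_1(A)$; writing $\lambda-\tau_1(A)=td$ for an integer $t$ gives $\tau_1(A)=\lambda-td$, and substituting back yields $\tau_2(A)=\lambda+tc$. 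Adding these two equalities and using $a=c-d$ produces $|A|=\tau_1(A)+\tau_2(A)=2\lambda+t(c-d)=2\lambda+ta$, which is the asserted form.

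For the large/average/small trichotomy I would first pin down the sign of $a$. Since the two replication numbers satisfy $r_1>r_2$ (with $r_1+r_2=v+1$ by Theorem~\ref{thm:RyserWoodall}), we get $cg=r_1-1>r_2-1=dg$, hence $c>d\ge 1$ and $a=c-d>0$. Consequently $|A|-2\lambda=ta$ has the same sign as $t$: the block $A$ is large ($|A|>2\lambda$) exactly when $t>0$, average ($|A|=2\lambda$) exactly when $t=0$, and small ($|A|<2\lambda$) exactly when $t<0$.

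The inequalities on $\tau_1(A)$ and $\tau_2(A)$ then follow directly from $\tau_1(A)=\lambda-td$ and $\tau_2(A)=\lambda+tc$ with $c,d>0$. If $t=0$ then $\tau_1(A)=\tau_2(A)=\lambda$; if $t<0$ (small block) then $-td>0$ and $tc<0$, so $\tau_1(A)>\lambda>\tau_2(A)$; and if $t>0$ (large block) then $td>0$ and $tc>0$, so $\tau_2(A)>\lambda>\tau_1(A)$.

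There is no real obstacle here; the only points needing a moment's care are the divisibility step — where coprimality of $c$ and $d$ is exactly what forces $\lambda-\tau_1(A)$ itself (not merely $c(\lambda-\tau_1(A))$) to be a multiple of $d$ — and the remark that $c,d\ge 1$, so that the signs in the last paragraph are unambiguous. Both are guaranteed by the standing conventions $\rho=c/d>1$ and $\gcd(c,d)=1$ recalled before the statement.
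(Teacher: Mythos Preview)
Your proof is correct and follows exactly the paper's own argument: divide Equation~(\ref{tau1r1-1+tau2r2-1}) by $g$ and use the coprimality of $c$ and $d$ to extract the integer parameter $t$. You have simply written out in full the details the paper leaves implicit, including the observation that $a=c-d>0$ needed for the large/average/small trichotomy.
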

\noindent
Let $x=(e_2-\lambda)/c$ in Equation (\ref{r1form}). Then, $r_1 = 2\lambda +xa$.
Since $c$ and $a$ are co-prime, it is clear that $c$ divides $e_2 - \lambda$ and hence $x$ is an integer. Therefore we get
\begin{equation}\label{e2lambdaxcform}
e_2=\lambda+xc
\end{equation}
Similarly let $y=(e_1-\lambda)/d$ in Equation (\ref{r2form}). Then, $r_2 = 2\lambda - ya$, where $y$ is an integer and
\begin{equation*}
e_1=\lambda+yd
\end{equation*}
\noindent
In this article, we prove the following results. 
\begin{thm}\label{Inverse of incidence matrix}
	Let $\mathsf{A}$ be the incidence matrix of a Ryser design of order $v$ and index $\lambda$ with block sizes $k_i,~~ i=1,2,\ldots,v$. Let 
	\begin{equation}\label{MatrixD}
	\mathsf{D}=diag(k_1-\lambda,k_2-\lambda,\ldots,k_v-\lambda)
	\end{equation}
	Let 
	\begin{equation}\label{MatrixR}
	\mathsf{R}=
	\left(
	\begin{array}{ccc}
	\rho \mathsf{J}_{e_1\times e_1} & \vline & \mathsf{J}_{e_1\times e_2}  \\
	\hline & \vline &      \\
	\mathsf{J}_{e_2\times e_1} & \vline & \dfrac{1}{\rho}\mathsf{J}_{e_2\times e_2} \\
	\end{array}
	\right),
	\end{equation}
	where $\mathsf{J}$ is all one matrix of suitable order.
	Then,
	$$\mathsf{A}^{-1}=\mathsf{D}^{-1}\mathsf{A}^T(\mathsf{I_v}+\mathsf{R})^{-1}=\mathsf{D}^{-1}\mathsf{A}^T\left(\mathsf{I_v}-\dfrac{\rho }{\lambda(\rho +1)^2}\mathsf{R} \right).$$
\end{thm}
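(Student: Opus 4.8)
\emph{Plan.} The plan is to establish the equivalent identity $\mathsf{A}^{T}(\mathsf{I}_v+\mathsf{R})^{-1}\mathsf{A}=\mathsf{D}$; multiplying it on the left by $\mathsf{D}^{-1}$ and on the right by $\mathsf{A}^{-1}$ then gives $\mathsf{D}^{-1}\mathsf{A}^{T}(\mathsf{I}_v+\mathsf{R})^{-1}=\mathsf{A}^{-1}$, which is the first asserted formula. First I would record that all the inverses in sight exist. Order the points so that those of $E_1$ precede those of $E_2$, and take $\mathsf{A}$ to have rows indexed by points and columns by blocks, so that $(\mathsf{A}^{T}\mathsf{A})_{B,B'}=|B\cap B'|$; by axioms (2) and (3) in the definition of a Ryser design this gives $\mathsf{A}^{T}\mathsf{A}=\mathsf{D}+\lambda\mathsf{J}_v$. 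Since $\mathsf{D}$ has positive diagonal ($k_i>\lambda$) and $\lambda\mathsf{J}_v$ is positive semidefinite, $\mathsf{A}^{T}\mathsf{A}$ is positive definite, so $(\det\mathsf{A})^2=\det(\mathsf{A}^{T}\mathsf{A})\neq 0$ and $\mathsf{A}$ is invertible.

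The second asserted equality is a statement about $\mathsf{R}$ alone, and the key observation is that $\mathsf{R}$ has rank one. Let $\mathbf{u}$ be the column vector with $u_x=\sqrt{\rho}$ for $x\in E_1$ and $u_x=1/\sqrt{\rho}$ for $x\in E_2$; then $\mathsf{R}=\mathbf{u}\mathbf{u}^{T}$, since $(\mathbf{u}\mathbf{u}^{T})_{xy}=u_xu_y$ equals $\rho$, $1$, or $1/\rho$ according as $x,y\in E_1$, one of each, or $x,y\in E_2$. Hence $\mathsf{R}^2=(\mathbf{u}^{T}\mathbf{u})\,\mathsf{R}=\big(\rho e_1+\tfrac{e_2}{\rho}\big)\mathsf{R}$, and Equation (\ref{e1rhopluse2byrho_this_ch}) identifies the scalar as $\mu:=\tfrac{\lambda(\rho+1)^2}{\rho}-1$. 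A one-line computation then shows $(\mathsf{I}_v+\mathsf{R})\big(\mathsf{I}_v-\tfrac{1}{1+\mu}\mathsf{R}\big)=\mathsf{I}_v$, and since $\tfrac{1}{1+\mu}=\tfrac{\rho}{\lambda(\rho+1)^2}$ this both establishes the closed form for $(\mathsf{I}_v+\mathsf{R})^{-1}$ and shows $\mathsf{I}_v+\mathsf{R}$ is invertible. (Equivalently one could aim to prove $\mathsf{A}\mathsf{D}^{-1}\mathsf{A}^{T}=\mathsf{I}_v+\mathsf{R}$, whose $(x,y)$-entry is the combinatorially meaningful quantity $\sum_{B\supseteq\{x,y\}}1/(k_B-\lambda)$; but routing through $\mathsf{A}^{T}\mathsf{A}$ is shorter.)

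The heart of the proof is the single identity $\mathsf{A}^{T}\mathsf{R}\,\mathsf{A}=\tfrac{\lambda^2(\rho+1)^2}{\rho}\,\mathsf{J}_v$. Using $\mathsf{R}=\mathbf{u}\mathbf{u}^{T}$, the left side is $(\mathsf{A}^{T}\mathbf{u})(\mathsf{A}^{T}\mathbf{u})^{T}$, and the $B$-entry of $\mathsf{A}^{T}\mathbf{u}$ is $\sum_{x\in B}u_x=\sqrt{\rho}\,\tau_1(B)+\tfrac{1}{\sqrt{\rho}}\tau_2(B)=\tfrac{1}{\sqrt{\rho}}\big(\rho\tau_1(B)+\tau_2(B)\big)$, which by Equation (\ref{tau1rhotau2form}) equals $\lambda(\rho+1)/\sqrt{\rho}$ for \emph{every} block $B$. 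Thus $\mathsf{A}^{T}\mathbf{u}=\tfrac{\lambda(\rho+1)}{\sqrt{\rho}}\mathbf{1}$ and the identity follows. Combining it with $\mathsf{A}^{T}\mathsf{A}=\mathsf{D}+\lambda\mathsf{J}_v$ and the formula $(\mathsf{I}_v+\mathsf{R})^{-1}=\mathsf{I}_v-\tfrac{\rho}{\lambda(\rho+1)^2}\mathsf{R}$ from the previous step gives
\[
\mathsf{A}^{T}(\mathsf{I}_v+\mathsf{R})^{-1}\mathsf{A}=\mathsf{A}^{T}\mathsf{A}-\tfrac{\rho}{\lambda(\rho+1)^2}\,\mathsf{A}^{T}\mathsf{R}\mathsf{A}=(\mathsf{D}+\lambda\mathsf{J}_v)-\lambda\mathsf{J}_v=\mathsf{D},
\]
and multiplying on the left by $\mathsf{D}^{-1}$ and on the right by $\mathsf{A}^{-1}$ yields both stated expressions for $\mathsf{A}^{-1}$.

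I do not anticipate a serious obstacle here. Once one fixes the convention so that it is $\mathsf{A}^{T}\mathsf{A}$ (i.e.\ the matrix of block intersections, not of point co-occurrences) that carries the structure $\mathsf{D}+\lambda\mathsf{J}_v$, and once one recognises $\mathsf{R}=\mathbf{u}\mathbf{u}^{T}$, the computation is essentially forced and reduces entirely to Equations (\ref{e1rhopluse2byrho_this_ch}) and (\ref{tau1rhotau2form}). The only care needed is in the rank-one bookkeeping, namely that $\mathbf{u}^{T}\mathbf{u}=\rho e_1+e_2/\rho$ and that $\mathsf{A}^{T}\mathbf{u}$ is a scalar multiple of the all-ones vector.
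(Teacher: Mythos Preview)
Your proof is correct. The difference from the paper's argument is mainly one of packaging: the paper invokes Lemma~\ref{lemma:AD^-1A^T} (namely $\mathsf{A}\mathsf{D}^{-1}\mathsf{A}^{T}=\mathsf{I}_v+\mathsf{R}$, which it imports from Ryser's multiplicative--design framework) and then cites Miller's rank--one update formula (Theorem~\ref{thm:millerMatrixInverseThm}) to invert $\mathsf{I}_v+\mathsf{R}$. You instead work with the explicit factorisation $\mathsf{R}=\mathbf{u}\mathbf{u}^{T}$ throughout: you derive the Sherman--Morrison formula by the one--line check $(\mathsf{I}_v+\mathsf{R})(\mathsf{I}_v-\tfrac{1}{1+\mu}\mathsf{R})=\mathsf{I}_v$, and you establish the \emph{inverse} of Lemma~\ref{lemma:AD^-1A^T}, namely $\mathsf{A}^{T}(\mathsf{I}_v+\mathsf{R})^{-1}\mathsf{A}=\mathsf{D}$, directly from Equation~(\ref{tau1rhotau2form}) via the computation $\mathsf{A}^{T}\mathbf{u}=\tfrac{\lambda(\rho+1)}{\sqrt{\rho}}\mathbf{1}$. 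The two routes are logically equivalent (each key identity is the inverse of the other), but yours is fully self--contained and makes the role of the basic counting identity~(\ref{tau1rhotau2form}) more visible, at the small cost of not exhibiting $\mathsf{A}\mathsf{D}^{-1}\mathsf{A}^{T}=\mathsf{I}_v+\mathsf{R}$ explicitly.
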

\begin{thm}\label{thm:necessary condition}
	Let $\mathcal{D}$ be a Ryser design of order $v$ and index $\lambda$ with replication numbers $r_1\text{ and }r_2$. Let $r=r_1-r_2$. 
	\begin{enumerate}
		\item If  $\mathcal{D}$ is a Ryser design of Type-1 with $D=0 ~~\text{, then }~~ v= 2\lambda\pm\sqrt{(2\lambda-1)^2+(r-1)^2-1}$  and $\{(2\lambda-1)^2+r(r-2)\}$ is a perfect square.
		\item 	If  $\mathcal{D}$ is a Ryser design of Type-1 with $D=-1 ~~\text{, then }~~v=2\lambda\pm\sqrt{(2\lambda-1)^2+(r-1)^2+4r-1}$ and $\{(2\lambda-1)^2+r(r+2)\}$ is a perfect square.
	\end{enumerate}
\end{thm}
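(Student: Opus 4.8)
The plan is to turn each of the two cases into a single quadratic equation in $v$ whose discriminant is precisely the quantity asserted to be a perfect square; the value of $v$ then drops out of the quadratic formula, and the perfect-square conclusion is exactly the integrality requirement on $v$.

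The engine is a pair of identities. Rewrite Equation~(\ref{e1Dform}) as $\rho e_1=\lambda(\rho+1)+D$. When $D=0$ this gives $e_1=\lambda(\rho+1)/\rho$, and since $\rho=(r_1-1)/(r_2-1)$ together with $r_1+r_2=v+1$ yields $(\rho+1)/\rho=(v-1)/(r_1-1)$, we get $e_1(r_1-1)=\lambda(v-1)$. By the definition $D=e_1-r_2$ used by Seress, the hypothesis $D=0$ means $e_1=r_2$, so
\[
\lambda(v-1)=r_2(r_1-1)\qquad(D=0).
\]
Symmetrically, rewrite Equation~(\ref{e2Dform}) as $e_2=\lambda(\rho+1)-\rho(D+1)$. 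When $D=-1$ this gives $e_2=\lambda(\rho+1)$, and $\rho+1=(v-1)/(r_2-1)$ gives $e_2(r_2-1)=\lambda(v-1)$; since $D=r_1-e_2-1$, the hypothesis $D=-1$ means $r_1=e_2$, so
\[
\lambda(v-1)=r_1(r_2-1)\qquad(D=-1).
\]

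Now set $r=r_1-r_2$ and use $r_1+r_2=v+1$, so that $r_1r_2=\big((v+1)^2-r^2\big)/4$, $r_1=(v+1+r)/2$, $r_2=(v+1-r)/2$. In the case $D=0$, expanding $r_2(r_1-1)=r_1r_2-r_2$, multiplying the identity by $4$, and collecting terms via $(v+1)^2-2(v+1)=v^2-1$ and $-r^2+2r=1-(r-1)^2$, one gets
\[
4\lambda(v-1)=v^2-(r-1)^2,
\]
i.e.\ $v^2-4\lambda v+\big(4\lambda-(r-1)^2\big)=0$. The quadratic formula, together with the elementary identity $4\lambda^2-4\lambda=(2\lambda-1)^2-1$, gives $v=2\lambda\pm\sqrt{(2\lambda-1)^2+(r-1)^2-1}$; since $v$ is an integer the radicand is a perfect square, and $(r-1)^2-1=r(r-2)$ rewrites it as $(2\lambda-1)^2+r(r-2)$. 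The case $D=-1$ is handled the same way: expanding $r_1(r_2-1)=r_1r_2-r_1$ and simplifying collapses the identity to $4\lambda(v-1)=v^2-(r+1)^2$, hence $v=2\lambda\pm\sqrt{(2\lambda-1)^2+(r+1)^2-1}$, and $(r+1)^2-1=r(r+2)=(r-1)^2+4r-1$ puts this in the announced form with perfect-square condition $(2\lambda-1)^2+r(r+2)$.

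There is essentially no genuine obstacle: once the two identities $\lambda(v-1)=r_2(r_1-1)$ and $\lambda(v-1)=r_1(r_2-1)$ are secured, everything reduces to a one-variable quadratic manipulation. The only points needing attention are the algebraic bookkeeping — reconciling the displayed forms $(r-1)^2+4r-1$ and $r(r+2)$, keeping the $\pm$ consistent (both roots survive because the statement permits either sign), and verifying $4\lambda^2-4\lambda=(2\lambda-1)^2-1$, which is what manufactures the $(2\lambda-1)^2$ term. I would also remark that these two identities are merely a transparent rephrasing of Theorem~\ref{thm:D=0D=-1} through Equations~(\ref{e1Dform})–(\ref{e2Dform}), so no combinatorics beyond what the excerpt already supplies is required.
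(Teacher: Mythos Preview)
Your argument is correct. The two key identities $\lambda(v-1)=r_2(r_1-1)$ (when $D=0$) and $\lambda(v-1)=r_1(r_2-1)$ (when $D=-1$) follow exactly as you derive them from Equations~(\ref{e1Dform}) and~(\ref{e2Dform}), and the subsequent quadratic-in-$v$ manipulation is clean and accurate.

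Your route, however, differs from the paper's. The authors do \emph{not} specialize to $D=0$ or $D=-1$ at the outset. Instead they keep $D$ as a free parameter: from $e_1+e_2=v$ and Equations~(\ref{e1Dform}),~(\ref{e2Dform}) they obtain a quadratic in $\rho$, equate its root with the expression $\rho=(v-1+r)/(v-1-r)$ coming from $r=r_1-r_2$, and after simplification land on a cubic in $v$; factoring out $(v-1)$ yields the general quadratic $v^2-4\lambda v+4\lambda+4Dr-(r-1)^2=0$, hence
\[
v=2\lambda\pm\sqrt{(2\lambda-1)^2+(r-1)^2-4Dr-1}
\]
for \emph{all} $D$, and only then do they substitute $D=0,-1$. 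Your approach is shorter and more transparent for the two cases at hand, bypassing the cubic-and-factor step entirely. The paper's approach costs more algebra but buys a formula valid for arbitrary $D$, which they then exploit in the immediately following Corollary and in the proof of Theorem~\ref{thm:result3}; your specialization would not directly supply that general tool.
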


\begin{thm}\label{thm:result3}(cf. conjecture \ref{con:bound_on_v})
	Let $\mathcal{D}$ be a Ryser design of order $v$ and index $\lambda$. 
	\begin{enumerate}[(a)]
		\item If $D\leq -1$, then $v\geq 4\lambda-1$.
		\item If $D\geq 0$, then  $\lambda^2 +\lambda+1 \geq v$.
	\end{enumerate}
\end{thm}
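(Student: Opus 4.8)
\noindent The plan is to reduce both parts to an inequality in the single variable $\rho$. Adding Equations~\eqref{e1Dform} and \eqref{e2Dform} gives, for \emph{every} Ryser design,
\[
v=e_1+e_2=2\lambda+\frac{\lambda+D}{\rho}+(\lambda-D-1)\rho=:\phi(\rho).
\]
By Theorem~\ref{thm:rho<lambda}, for $\lambda\ge 2$ one has $\frac{\lambda}{\lambda-1}\le\rho\le\lambda$, and $\rho>1$ always, since $\rho=c/d$ with $c>d\ge 1$. I would first dispose of $\lambda=1$, which is classical: a Ryser design with $\lambda=1$ is a near-pencil, for which $D=-1$ and $v\ge 4$, so (a) holds and (b) is vacuous. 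Hence from now on assume $\lambda\ge 2$.

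For part (a), suppose $D\le -1$, so $\lambda-D-1\ge\lambda>0$. Multiplying $\phi(\rho)-(4\lambda-1)$ by $\rho>0$ gives the quadratic
\[
g(\rho):=\rho\bigl(\phi(\rho)-(4\lambda-1)\bigr)=(\lambda-D-1)\rho^{2}-(2\lambda-1)\rho+(\lambda+D),
\]
and the crucial point is that $g(1)=0$, whence $g(\rho)=(\rho-1)\bigl[(\lambda-D-1)\rho-(\lambda+D)\bigr]$. As $\rho>1$, the sign of $v-(4\lambda-1)$ is that of $(\lambda-D-1)\rho-(\lambda+D)$. If $\lambda+D\le 0$ this is positive at once. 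If $-\lambda<D\le -1$, it remains to check $\rho\ge\frac{\lambda+D}{\lambda-D-1}$; this follows from $\rho\ge\frac{\lambda}{\lambda-1}$ together with the elementary inequality $\frac{\lambda}{\lambda-1}\ge\frac{\lambda+D}{\lambda-D-1}$, which upon clearing the (positive) denominators is just $-D(2\lambda-1)\ge 0$. Hence $v\ge 4\lambda-1$.

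For part (b), suppose $D\ge 0$. Then $\lambda+D>0$, so $\phi''(\rho)=2(\lambda+D)/\rho^{3}>0$ on $(0,\infty)$; thus $\phi$ is strictly convex there, \emph{regardless of the sign of} $\lambda-D-1$. Since a convex function on $\bigl[\frac{\lambda}{\lambda-1},\lambda\bigr]$ is bounded above by the larger of its endpoint values, it is enough to compute
\[
\phi(\lambda)=\lambda^{2}+\lambda+1-\frac{D(\lambda^{2}-1)}{\lambda},\qquad
\phi\!\left(\tfrac{\lambda}{\lambda-1}\right)=4\lambda-1-\frac{D(2\lambda-1)}{\lambda(\lambda-1)}.
\]
Both are $\le\lambda^{2}+\lambda+1$: the first because $D\ge 0$ and $\lambda^{2}-1>0$, the second because it is $\le 4\lambda-1\le\lambda^{2}+\lambda+1$, the last inequality being $(\lambda-1)(\lambda-2)\ge 0$. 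Therefore $v=\phi(\rho)\le\lambda^{2}+\lambda+1$.

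The only non-formal ingredient is the two-sided bound $\frac{\lambda}{\lambda-1}\le\rho\le\lambda$ of Theorem~\ref{thm:rho<lambda}, and I expect the estimate in part (a) to be the delicate one: the unconstrained minimum of $\phi$ gives, via AM--GM, only $v\ge 2\lambda+2\sqrt{(\lambda+D)(\lambda-D-1)}$, which is strictly weaker than $4\lambda-1$ because $(2D+1)^{2}>0$ for integral $D$, so one genuinely must exploit the lower bound $\rho\ge\frac{\lambda}{\lambda-1}$ — which is exactly what the factor $\rho-1$ of $g$ makes possible. The $\lambda=1$ bookkeeping and the endpoint evaluations in (b) are the only routine fiddly points.
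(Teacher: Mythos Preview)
Your proof is correct, and it takes a genuinely different route from the paper's. The paper first derives (in its Theorem~\ref{thm:necessary condition}) the closed form $v=2\lambda\pm\sqrt{(2\lambda-1)^2+(r-1)^2-4Dr-1}$ with $r=r_1-r_2$, and then: for (a) checks that the radicand is $\ge(2\lambda-1)^2$ when $D\le -1$; for (b) plugs the quadratic formula for $\rho$ (Equation~\eqref{root}) into the bound $\rho\le\lambda$ of Theorem~\ref{thm:rho<lambda} and simplifies to $v\le\lambda^2+\lambda+1-D(\lambda-1/\lambda)$. You instead work directly with $v=\phi(\rho)$: in (a) you factor the quadratic $\rho(\phi(\rho)-(4\lambda-1))$ via its root at $\rho=1$, and in (b) you exploit convexity of $\phi$ and evaluate at both endpoints of $[\lambda/(\lambda-1),\lambda]$. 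Your argument is more self-contained (it bypasses the $v=2\lambda\pm\sqrt{\cdots}$ detour entirely), and the convexity observation in (b) is a clean substitute for the paper's algebraic inversion.

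One small point about your closing commentary: in part~(a) you do \emph{not} actually need the sharp lower bound $\rho\ge\lambda/(\lambda-1)$. The second factor $(\lambda-D-1)\rho-(\lambda+D)$ already equals $-(2D+1)\ge 1$ at $\rho=1$ and is increasing in $\rho$, so $\rho>1$ alone suffices. This matches the paper, whose proof of (a) likewise uses no information about $\rho$ beyond $r=r_1-r_2>0$. The genuine use of Theorem~\ref{thm:rho<lambda} is confined to part~(b), where both you and the paper invoke $\rho\le\lambda$.
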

\noindent
Finally, we discuss a special case of Ryser designs.
\begin{thm}\label{thm:result4}
	Let $\mathcal{D}$ be a Ryser design of order $v$, index $\lambda$ and replication numbers $r_1$ and $r_2$ and two block sizes $k_1>k_2$. 
	\begin{enumerate}[(a)]
		\item If $k_1=2\lambda+t_1a$ with $2t_1c+\lambda>e_1$ that is $2t_1>x$, then $\mathcal{D}$ is of Type-1. \\Or
		\item If $k_2=2\lambda-t_2a$ with $2t_2d+\lambda>e_2$ that is $2t_2>y$, then $\mathcal{D}$ is of Type-1.
	\end{enumerate}
\end{thm}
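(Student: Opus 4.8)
The plan is to deduce the result from Kramer's characterization (Theorem~\ref{Thm:Type-1_iff_2columnsum_1occurs_exactly_once}). Since $\mathcal{D}$ has exactly the two block sizes $k_1>k_2$, that theorem says $\mathcal{D}$ is of Type-1 exactly when one of $k_1,k_2$ is the size of precisely one block. Writing $n_i$ for the number of blocks of size $k_i$ (so $n_1,n_2\geq 1$ and $n_1+n_2=v$), it therefore suffices to show that, under the hypothesis of (a), $n_1=1$, and under the hypothesis of (b), $n_2=1$. The two parts are mirror images under the symmetry $E_1\leftrightarrow E_2$, $c\leftrightarrow d$, $x\leftrightarrow y$, so I would write out only (a).

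For (a), I would first record the data attached to a block $B$ of size $k_1=2\lambda+t_1a$: by Lemma~\ref{lemma:blocksize}, $|B\cap E_2|=\tau_2(B)=\lambda+t_1c$, while $|E_2|=e_2=\lambda+xc$; thus the hypothesis of part (a) is precisely $2t_1>x$. Now suppose for contradiction that $n_1\geq 2$ and choose two distinct blocks $B,B'$ of size $k_1$. Counting inside the $e_2$-element set $E_2$ by inclusion--exclusion,
$$|B\cap B'\cap E_2|\ \geq\ |B\cap E_2|+|B'\cap E_2|-|E_2|\ =\ 2(\lambda+t_1c)-(\lambda+xc)\ =\ \lambda+(2t_1-x)c .$$
On the other hand $B\cap B'\cap E_2\subseteq B\cap B'$, and $|B\cap B'|=\lambda$ since $\mathcal{D}$ is a Ryser design and $B\neq B'$; hence $|B\cap B'\cap E_2|\leq\lambda$. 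Combining the two bounds gives $(2t_1-x)c\leq 0$, and since $c>0$ this forces $2t_1\leq x$, contradicting the hypothesis. Therefore $n_1\leq 1$; as $k_1$ is an actual block size we get $n_1=1$, and Theorem~\ref{Thm:Type-1_iff_2columnsum_1occurs_exactly_once} shows $\mathcal{D}$ is of Type-1.

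Part (b) is the same argument carried out in $E_1$: a block of size $k_2=2\lambda-t_2a$ meets $E_1$ in $\tau_1=\lambda+t_2d$ points, $|E_1|=e_1=\lambda+yd$, and the hypothesis reads $2t_2>y$; two distinct blocks of size $k_2$ would then meet inside $E_1$ in at least $\lambda+(2t_2-y)d>\lambda$ points, which is impossible, so $n_2=1$ and again $\mathcal{D}$ is of Type-1 by Theorem~\ref{Thm:Type-1_iff_2columnsum_1occurs_exactly_once}.

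There is no serious obstacle here; the only real content is noticing that the hypothesis has been tailored so that $\tau_2(B)+\tau_2(B')$ exceeds $\lambda+|E_2|$, which via inclusion--exclusion forces the two $k_1$-blocks to share more than $\lambda$ points of $E_2$. The things to be careful about are the bookkeeping that translates Lemma~\ref{lemma:blocksize} and the displayed formulas for $e_1,e_2$ into the clean inequality above, and the positivity of $c$ (respectively $d$), which is what makes the last step $(2t_1-x)c\leq 0\Rightarrow 2t_1\leq x$ legitimate.
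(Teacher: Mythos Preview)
Your argument is correct, and it is genuinely different from the paper's. The paper first proves a separate uniqueness statement (Theorem~\ref{Thm:unique_block}) by Ryser--Woodall complementation: assuming $\mathcal{D}$ is of Type-2, complementing with respect to a suitably chosen block produces a new Ryser design with $\overline{e_2}<\overline{\lambda}$, which cannot contain large or average blocks, and this forces the block $A$ to be the only one with $t>x/2$; Theorem~\ref{thm:result4} then follows by invoking Theorem~\ref{Thm:Type-1_iff_2columnsum_1occurs_exactly_once}. Your route bypasses complementation entirely: the inclusion--exclusion bound $|B\cap B'\cap E_2|\geq 2\tau_2(B)-e_2=\lambda+(2t_1-x)c$ together with $|B\cap B'|=\lambda$ gives the uniqueness of the $k_1$-block in one line, after which Kramer's criterion finishes the job exactly as in the paper. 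The advantage of your approach is that it is shorter, fully self-contained, and never needs the Type-2 hypothesis that Theorem~\ref{Thm:unique_block} carries (so there is no implicit proof-by-contradiction structure to manage); the paper's complementation argument, on the other hand, yields the slightly stronger intermediate statement that the large block is unique among \emph{all} blocks with $t>x/2$, not just among blocks of its own size, which is a fact of independent interest but not needed here.
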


\section{The inverse of incidence matrix}
\noindent
We begin by stating a well known and an important  relation of the incidence matrix $\mathsf{A}$ of a Ryser design and the diagonal matrix with diagonal entries $k_i-\lambda$, where $~k_i,~~ i=1,2,\ldots,v~$ is the $i$-th column sum (block size) of the incidence matrix $\mathsf{A}$. 
\begin{lemma}\label{lemma:AD^-1A^T}
	Let $\mathsf{A}$ be the incidence matrix of a Ryser design  with index $\lambda$ and block sizes $k_i,~~ i=1,2,\ldots,v$. 
	Then, $\mathsf{A}^T\mathsf{A}= \mathsf{D}+\lambda \mathsf{J_v} \text{ and }
	\mathsf{A}\mathsf{D}^{-1}\mathsf{A}^T=\mathsf{I_v}+\mathsf{R},$ where $\mathsf{D}~~\text{ and }~~\mathsf{R}$ are as defined in Equation (\ref{MatrixD}) (\ref{MatrixR}) respectively and $\mathsf{J}$ is all one matrix.
\end{lemma}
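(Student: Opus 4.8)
The plan is to treat the two identities in turn. The first one, $\mathsf{A}^T\mathsf{A}=\mathsf{D}+\lambda\mathsf{J}_v$, is immediate from the defining intersection property: indexing the rows of $\mathsf{A}$ by points and its columns by blocks, the $(i,j)$-entry of $\mathsf{A}^T\mathsf{A}$ equals $|B_i\cap B_j|$, which is $k_i$ for $i=j$ and $\lambda$ for $i\neq j$; subtracting $\lambda\mathsf{J}_v$ therefore leaves $k_i-\lambda$ on the diagonal and $0$ off it, i.e.\ leaves $\mathsf{D}$. I would record here two facts needed below: $\mathsf{D}$ is invertible since each $k_i-\lambda\ge 1>0$; and $\mathsf{A}$ is invertible, because $\mathsf{A}^T\mathsf{A}=\mathsf{D}+\lambda\mathsf{J}_v$ is the sum of a positive definite matrix and a positive semidefinite one, hence positive definite, so that $\det(\mathsf{A})^2=\det(\mathsf{A}^T\mathsf{A})>0$.

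For the second identity I would avoid an entrywise computation of $\mathsf{A}\mathsf{D}^{-1}\mathsf{A}^T$ (which would require evaluating $\sum_{j:\,p,q\in B_j}(k_j-\lambda)^{-1}$ for all pairs of points). Instead, since $\mathsf{A}$ is invertible, the claimed identity $\mathsf{A}\mathsf{D}^{-1}\mathsf{A}^T=\mathsf{I}_v+\mathsf{R}$ is equivalent to
$$\mathsf{A}^T\bigl(\mathsf{A}\mathsf{D}^{-1}\mathsf{A}^T\bigr)\mathsf{A}=\mathsf{A}^T\bigl(\mathsf{I}_v+\mathsf{R}\bigr)\mathsf{A},\qquad\text{i.e.}\qquad (\mathsf{A}^T\mathsf{A})\,\mathsf{D}^{-1}(\mathsf{A}^T\mathsf{A})=\mathsf{A}^T\mathsf{A}+\mathsf{A}^T\mathsf{R}\mathsf{A}.$$
Expanding the left-hand side with $\mathsf{A}^T\mathsf{A}=\mathsf{D}+\lambda\mathsf{J}_v$ and $\mathsf{J}_v\mathsf{D}^{-1}\mathsf{J}_v=\bigl(\sum_i(k_i-\lambda)^{-1}\bigr)\mathsf{J}_v$, then substituting the value $\sum_i(k_i-\lambda)^{-1}=(\rho+1)^2/\rho-1/\lambda$ from Equation~(\ref{sum_of_all}), collapses it to $\mathsf{D}+\bigl(\lambda+\lambda^2(\rho+1)^2/\rho\bigr)\mathsf{J}_v$. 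Since the right-hand side is $\mathsf{D}+\lambda\mathsf{J}_v+\mathsf{A}^T\mathsf{R}\mathsf{A}$, the whole lemma reduces to showing $\mathsf{A}^T\mathsf{R}\mathsf{A}=\dfrac{\lambda^2(\rho+1)^2}{\rho}\,\mathsf{J}_v$.

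For this last step I would exploit that $\mathsf{R}$ has rank one: ordering the points so that those of $E_1$ come first, $\mathsf{R}=\mathsf{w}\mathsf{w}^T$, where $\mathsf{w}$ is the column vector with entry $\sqrt{\rho}$ in each coordinate indexed by $E_1$ and $1/\sqrt{\rho}$ in each coordinate indexed by $E_2$. Then $\mathsf{A}^T\mathsf{R}\mathsf{A}=(\mathsf{A}^T\mathsf{w})(\mathsf{A}^T\mathsf{w})^T$, and the $j$-th coordinate of $\mathsf{A}^T\mathsf{w}$ is $\sqrt{\rho}\,\tau_1(B_j)+\rho^{-1/2}\tau_2(B_j)=\rho^{-1/2}\bigl(\rho\,\tau_1(B_j)+\tau_2(B_j)\bigr)=\rho^{-1/2}\lambda(\rho+1)$ by Equation~(\ref{tau1rhotau2form}); the point is that this does not depend on $j$. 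Hence every coordinate of $\mathsf{A}^T\mathsf{w}$ equals $\lambda(\rho+1)\rho^{-1/2}$, so $(\mathsf{A}^T\mathsf{w})(\mathsf{A}^T\mathsf{w})^T=\lambda^2(\rho+1)^2\rho^{-1}\mathsf{J}_v$, which is exactly what is needed.

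I do not expect a genuine obstacle here: the proof is linear-algebraic bookkeeping once the two combinatorial inputs are in place. The decisive points are (i) recognizing that conjugating by the invertible matrix $\mathsf{A}$ reduces the statement to an identity among $\mathsf{D}$, $\mathsf{J}_v$ and $\mathsf{R}$, and (ii) the per-block relation $\rho\,\tau_1(B)+\tau_2(B)=\lambda(\rho+1)$, which is precisely what forces $\mathsf{A}^T\mathsf{R}\mathsf{A}$ to be a scalar multiple of $\mathsf{J}_v$. If one prefers to avoid $\sqrt{\rho}$, the same computation goes through on the four-term form $\mathsf{R}=\rho\,\mathsf{u}_1\mathsf{u}_1^T+\mathsf{u}_1\mathsf{u}_2^T+\mathsf{u}_2\mathsf{u}_1^T+\rho^{-1}\mathsf{u}_2\mathsf{u}_2^T$ (with $\mathsf{u}_i$ the characteristic vector of $E_i$), using the factorization $\rho\tau_1\tau_1'+\tau_1\tau_2'+\tau_2\tau_1'+\rho^{-1}\tau_2\tau_2'=\rho^{-1}(\rho\tau_1+\tau_2)(\rho\tau_1'+\tau_2')$.
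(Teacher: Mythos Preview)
Your argument is correct. The first identity is indeed immediate, and for the second you legitimately reduce (via conjugation by the invertible $\mathsf{A}$) to the single computation $\mathsf{A}^T\mathsf{R}\mathsf{A}=\lambda^2(\rho+1)^2\rho^{-1}\mathsf{J}_v$, which follows cleanly from the rank-one factorisation $\mathsf{R}=\mathsf{w}\mathsf{w}^T$ together with Equation~(\ref{tau1rhotau2form}). The algebra checks: $(\mathsf{D}+\lambda\mathsf{J})\mathsf{D}^{-1}(\mathsf{D}+\lambda\mathsf{J})=\mathsf{D}+2\lambda\mathsf{J}+\lambda^2\bigl(\sum_i(k_i-\lambda)^{-1}\bigr)\mathsf{J}$, and substituting~(\ref{sum_of_all}) gives exactly $\mathsf{D}+\lambda\mathsf{J}+\lambda^2(\rho+1)^2\rho^{-1}\mathsf{J}$, as you claim.

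Your route, however, is not the paper's. The paper does not prove the lemma directly; it records it as a specialisation of Ryser's general identity for multiplicative designs (the lemma and Corollary~\ref{cor_1} just below), which for arbitrary $\mathsf{X},\mathsf{Y}$ with $\mathsf{X}\mathsf{Y}=\mathsf{D}+[\sqrt{\lambda_i\lambda_j}]$ yields $\mathsf{Y}\mathsf{D}^{-1}\mathsf{X}=\mathsf{I}+t[y_ix_j]$; setting $\mathsf{X}=\mathsf{A}^T$, $\mathsf{Y}=\mathsf{A}$ and all $\lambda_i=\lambda$ then produces $\mathsf{I}_v+\mathsf{R}$ after identifying the scalars $t,x_i$. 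That framework is more general (it covers all multiplicative designs and simultaneously delivers Equation~(\ref{sum_of_all}) via Corollary~\ref{cor_2}), whereas your argument is self-contained and elementary but \emph{consumes} Equation~(\ref{sum_of_all}) and the per-block relation~(\ref{tau1rhotau2form}) as inputs rather than producing them. Since both of those are established in Section~1 from independent counting arguments (and~(\ref{sum_of_all}) is cited from \cite{Sin,Ion1}), there is no circularity; you have simply traded Ryser's abstract matrix lemma for two concrete combinatorial identities already available in the paper.
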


\noindent
This result as also Equation (\ref{sum_of_all}) of previous section are a consequence of the following results of Ryser \cite{Ry2}: 

\begin{lemma}
	{	Let $\,\mathsf{X}=[x_{ij}] \,\mbox{ and }\, \mathsf{Y}=[y_{ij}]\,$  be real matrices of order$~ v  \text{ that satisfy the matrix}$ $\text{equation } \,\mathsf{X}\mathsf{Y}=\mathsf{D}+[\sqrt{\lambda_i\lambda_j}]\,, \text{where} ~\mathsf{D}~ \text{is the diagonal matrix}$  $\,diag[k_1-\lambda_1,k_2-\lambda_2,\ldots, k_v-\lambda_v] \,$ and the scalars  $k_i-\lambda_i\,$  and $ \lambda_j $ are positive and non-negative respectively. Then  $\mathsf{Y}\mathsf{D}^{-1}\mathsf{X}=\mathsf{I}+t[y_ix_j], \text{ where } \mathsf{D}^{-1}$ denotes the inverse of $\mathsf{D},~ \mathsf{I}$ denotes the identity matrix of order $v$ and the scalars $\,t,\, y_i,\, x_j \,$ are determined by the equations,\\
		$t=1+ \left(\dfrac{\lambda_1}{k_1-\lambda_1}\right) +\cdots +\left(\dfrac{\lambda_n}{k_v-\lambda_v}\right)$\\
		$ty_i=\left(\dfrac{\sqrt{\lambda_1}}{k_1-\lambda_1}\right)y_{i1} +\cdots +\left(\dfrac{\sqrt{\lambda_n}}{k_v-\lambda_v}\right)y_{iv}$\\
		$tx_j=\left(\dfrac{\sqrt{\lambda_1}}{k_1-\lambda_1}\right)x_{1j} +\cdots +\left(\dfrac{\sqrt{\lambda_n}}{k_v-\lambda_v}\right)x_{vj}$.}
\end{lemma}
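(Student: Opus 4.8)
The plan is to exploit the fact that the perturbation $[\sqrt{\lambda_i\lambda_j}]$ is a rank-one matrix. Writing $\mathbf{s}=(\sqrt{\lambda_1},\dots,\sqrt{\lambda_v})^{T}$ for the column vector of the square roots, we have $[\sqrt{\lambda_i\lambda_j}]=\mathbf{s}\mathbf{s}^{T}$, so the hypothesis reads $\mathsf{X}\mathsf{Y}=\mathsf{D}+\mathbf{s}\mathbf{s}^{T}$. Since each $k_i-\lambda_i>0$, the diagonal matrix $\mathsf{D}$ is invertible (indeed positive definite), and as $\mathbf{s}\mathbf{s}^{T}$ is positive semidefinite the sum $\mathsf{D}+\mathbf{s}\mathbf{s}^{T}$ is positive definite and hence nonsingular. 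Therefore $\det\mathsf{X}\cdot\det\mathsf{Y}\neq0$ and both $\mathsf{X}$ and $\mathsf{Y}$ are invertible; I record also that $t=1+\sum_{i}\lambda_i/(k_i-\lambda_i)=1+\mathbf{s}^{T}\mathsf{D}^{-1}\mathbf{s}\geq1$, so $t\neq0$ and the scalars $y_i,x_j$ are well defined by the displayed equations.

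Next I would translate the three scalar equations into vector form. The relation $ty_i=\sum_j \sqrt{\lambda_j}\,y_{ij}/(k_j-\lambda_j)$ is precisely the $i$-th coordinate of $\mathsf{Y}\mathsf{D}^{-1}\mathbf{s}$, so $t\mathbf{y}=\mathsf{Y}\mathsf{D}^{-1}\mathbf{s}$ where $\mathbf{y}=(y_1,\dots,y_v)^{T}$; likewise $t\mathbf{x}^{T}=\mathbf{s}^{T}\mathsf{D}^{-1}\mathsf{X}$ where $\mathbf{x}=(x_1,\dots,x_v)^{T}$. The target matrix $[y_ix_j]$ is just the rank-one product $\mathbf{y}\mathbf{x}^{T}$, so the claim to be proved is $\mathsf{Y}\mathsf{D}^{-1}\mathsf{X}=\mathsf{I}+t\,\mathbf{y}\mathbf{x}^{T}$.

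The main computation starts from $\mathsf{Y}=\mathsf{X}^{-1}(\mathsf{D}+\mathbf{s}\mathbf{s}^{T})$, which gives
\[
\mathsf{Y}\mathsf{D}^{-1}\mathsf{X}=\mathsf{X}^{-1}(\mathsf{D}+\mathbf{s}\mathbf{s}^{T})\mathsf{D}^{-1}\mathsf{X}=\mathsf{I}+\mathsf{X}^{-1}\mathbf{s}\,\mathbf{s}^{T}\mathsf{D}^{-1}\mathsf{X}.
\]
It remains to identify $\mathsf{X}^{-1}\mathbf{s}$ with $\mathbf{y}$. For this I would multiply the hypothesis on the right by $\mathsf{D}^{-1}\mathbf{s}$: using $\mathbf{s}^{T}\mathsf{D}^{-1}\mathbf{s}=t-1$ one gets $\mathsf{X}\mathsf{Y}\mathsf{D}^{-1}\mathbf{s}=(\mathsf{D}+\mathbf{s}\mathbf{s}^{T})\mathsf{D}^{-1}\mathbf{s}=\mathbf{s}+\mathbf{s}(t-1)=t\mathbf{s}$, whence $\mathsf{Y}\mathsf{D}^{-1}\mathbf{s}=t\mathsf{X}^{-1}\mathbf{s}$. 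Comparing with $t\mathbf{y}=\mathsf{Y}\mathsf{D}^{-1}\mathbf{s}$ yields $\mathsf{X}^{-1}\mathbf{s}=\mathbf{y}$. Substituting this together with $\mathbf{s}^{T}\mathsf{D}^{-1}\mathsf{X}=t\mathbf{x}^{T}$ into the displayed identity above gives $\mathsf{X}^{-1}\mathbf{s}\,\mathbf{s}^{T}\mathsf{D}^{-1}\mathsf{X}=\mathbf{y}\,(t\mathbf{x}^{T})=t\,\mathbf{y}\mathbf{x}^{T}=t[y_ix_j]$, and the lemma follows.

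I expect the only real obstacle to be bookkeeping: correctly reading off that the scalar definitions of $t$, $y_i$ and $x_j$ are exactly the coordinates of $\mathbf{s}^{T}\mathsf{D}^{-1}\mathbf{s}$, $\mathsf{Y}\mathsf{D}^{-1}\mathbf{s}$ and $\mathbf{s}^{T}\mathsf{D}^{-1}\mathsf{X}$. The one genuinely nontrivial algebraic idea is the right-multiplication by $\mathsf{D}^{-1}\mathbf{s}$, which collapses $\mathsf{X}^{-1}\mathbf{s}$ to the vector $\mathbf{y}$; everything else is a single rank-one manipulation. This general lemma then specializes, with $\lambda_i=\lambda$ and $\mathsf{X}=\mathsf{A}^{T}$, $\mathsf{Y}=\mathsf{A}$, to the relation $\mathsf{A}\mathsf{D}^{-1}\mathsf{A}^{T}=\mathsf{I}_v+\mathsf{R}$ of Lemma \ref{lemma:AD^-1A^T} and to Equation (\ref{sum_of_all}).
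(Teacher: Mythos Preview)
Your argument is correct. The paper does not actually supply a proof of this lemma; it is quoted from Ryser \cite{Ry2} and used as a black box to derive Lemma~\ref{lemma:AD^-1A^T} and Equation~(\ref{sum_of_all}), so there is no in-paper proof to compare against. Your route---writing the perturbation as the rank-one matrix $\mathbf{s}\mathbf{s}^{T}$, noting that $\mathsf{D}+\mathbf{s}\mathbf{s}^{T}$ is positive definite (hence $\mathsf{X},\mathsf{Y}$ invertible), and then right-multiplying the hypothesis by $\mathsf{D}^{-1}\mathbf{s}$ to identify $\mathsf{X}^{-1}\mathbf{s}$ with $\mathbf{y}$---is exactly the standard Sherman--Morrison type manipulation that underlies Ryser's original result, and every step checks out.
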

\begin{defn}\rm{
		Let $\mathsf{A}$ be a $(0,1)$-matrix of order $m$ by $v>3$ that satisfies the matrix equation $\mathsf{A}^T\mathsf{A}=\mathsf{D}+[\sqrt{\lambda_i\lambda_j}]$, where $\mathsf{A}^T$ denotes transpose of $\mathsf{A}$ and $\mathsf{D}$ denotes the diagonal matrix  $\mathsf{D}=diag(k_1-\lambda_1,k_2-\lambda_2,\ldots,k_v-\lambda_v)$
		with $k_i-\lambda_i$ and $\lambda_i$ positive and also Fisher Type inequality implies $m\geq v$. We call a configuration whose
		incidence matrix $\mathsf{A}$ fulfills these requirements a \textit{multiplicative design} on
		the parameters $k_1,k_2,\ldots,k_v$ and $\lambda_1,\lambda_2,\ldots, \lambda_v$.}
\end{defn}
\begin{corollary}\label{cor_1}
	Let $\mathsf{A}$ be the incidence matrix of a multiplicative design on the parameters $k_1,k_2,\ldots,k_v$ and $\lambda_1,\lambda_2,\ldots, \lambda_v$. Then,\\
	\begin{equation*}
	\mathsf{A}\mathsf{D}^{-1}\mathsf{A}^T=\mathsf{I_v}+t[x_ix_j] 
	\end{equation*}
	where 
	\begin{align}
	t&=1+ \left(\dfrac{\lambda_1}{k_1-\lambda_1}\right) +\cdots +\left(\dfrac{\lambda_v}{k_v-\lambda_v}\right)\\
	tx_i&=\left(\dfrac{\sqrt{\lambda_1}}{k_1-\lambda_1}\right)a_{i1} +\cdots +\left(\dfrac{\sqrt{\lambda_v}}{k_v-\lambda_v}\right)a_{iv}.\label{txi_formula}
	\end{align}
\end{corollary}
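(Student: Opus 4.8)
The plan is to obtain the corollary as a direct specialization of the preceding lemma of Ryser, taking $\mathsf{X}=\mathsf{A}^T$ and $\mathsf{Y}=\mathsf{A}$. First I would verify that the hypotheses of the lemma are met. By the definition of a multiplicative design, the incidence matrix $\mathsf{A}$ satisfies $\mathsf{A}^T\mathsf{A}=\mathsf{D}+[\sqrt{\lambda_i\lambda_j}]$, so with this identification $\mathsf{X}\mathsf{Y}=\mathsf{A}^T\mathsf{A}=\mathsf{D}+[\sqrt{\lambda_i\lambda_j}]$ is exactly the matrix equation required by the lemma. The positivity of each $k_i-\lambda_i$ and the non-negativity of each $\lambda_i$ are likewise part of the definition of a multiplicative design, so $\mathsf{D}^{-1}$ exists and the lemma is applicable with $n=v$.

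Next I would track the correspondence between the entries of $\mathsf{X},\mathsf{Y}$ and those of $\mathsf{A}$. Writing $\mathsf{A}=[a_{ij}]$, we have $y_{ij}=a_{ij}$ and $x_{ij}=a_{ji}$. Substituting $y_{im}=a_{im}$ into the lemma's defining equation for $y_i$ gives $ty_i=\sum_{m=1}^{v}\frac{\sqrt{\lambda_m}}{k_m-\lambda_m}\,a_{im}$, which is precisely the scalar $x_i$ of the corollary as given in Equation~(\ref{txi_formula}). Substituting $x_{mj}=a_{jm}$ into the lemma's defining equation for $x_j$ gives $tx_j=\sum_{m=1}^{v}\frac{\sqrt{\lambda_m}}{k_m-\lambda_m}\,a_{jm}$, the same expression with index $j$ in place of $i$. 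Thus the symmetry $\mathsf{Y}=\mathsf{X}^T$ forces the two families of scalars furnished by the lemma to coincide: $y_i=x_i$ for every $i$.

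Finally I would read off the conclusion. The lemma yields $\mathsf{Y}\mathsf{D}^{-1}\mathsf{X}=\mathsf{I}+t[y_ix_j]$, that is $\mathsf{A}\mathsf{D}^{-1}\mathsf{A}^T=\mathsf{I}_v+t[y_ix_j]$; since $y_i=x_i$, the rank-one correction $[y_ix_j]$ is exactly the symmetric array $[x_ix_j]$, giving $\mathsf{A}\mathsf{D}^{-1}\mathsf{A}^T=\mathsf{I}_v+t[x_ix_j]$ with $t$ and $x_i$ as stated. This argument is essentially bookkeeping; the only point demanding care is the transposition of indices when passing from $\mathsf{A}$ to $\mathsf{X}=\mathsf{A}^T$, together with the verification that the two scalar families collapse into one under this symmetric choice, so that the asymmetric term $[y_ix_j]$ of the general lemma becomes the symmetric $[x_ix_j]$ of the corollary.
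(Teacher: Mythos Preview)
Your proposal is correct and matches the paper's approach: the corollary is stated immediately after Ryser's lemma with no separate proof, and is meant to be read as the specialization $\mathsf{X}=\mathsf{A}^T$, $\mathsf{Y}=\mathsf{A}$ that you carry out. Your explicit verification that the two scalar families $y_i$ and $x_j$ coincide under this symmetric choice is exactly the bookkeeping the paper leaves implicit.
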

\begin{corollary}\label{cor_2}
	The parameters $~~k_1,k_2,\ldots,k_v~~$ and $~~\lambda_1,\lambda_2,\ldots ,\lambda_v~~$ of a multiplicative design satisfy 
	$$\left[\dfrac{k_1^2}{k_1-\lambda_1} +\cdots +\dfrac{k_v^2}{k_v-\lambda_v}-v \right]\left[ 1+ \dfrac{\lambda_1}{k_1-\lambda_1} +\cdots +\dfrac{\lambda_v}{k_v-\lambda_v}\right]=\left[\dfrac{\sqrt{\lambda_1}}{k_1-\lambda_1}k_1 +\cdots +\dfrac{\sqrt{\lambda_v}}{k_v-\lambda_v}k_v\right]^2$$.
\end{corollary}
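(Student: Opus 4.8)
The plan is to extract the stated scalar identity from the matrix identity of Corollary \ref{cor_1} by contracting it on both sides with the all-ones vector. Write $\mathbf{1}$ for the all-ones column vector of length $v$ (the corollary being stated for the square case $m=v$, as the symbol $\mathsf{I}_v$ indicates), and recall that the $j$-th column sum of $\mathsf{A}$ equals $k_j$, so that $\mathsf{A}^T\mathbf{1}=(k_1,k_2,\ldots,k_v)^T$. The whole argument is simply that each of the three bracketed expressions in the claim is a contraction of one of the two matrices appearing in $\mathsf{A}\mathsf{D}^{-1}\mathsf{A}^T=\mathsf{I}_v+t[x_ix_j]$.

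Contracting the left-hand side I would compute
$$\mathbf{1}^T\mathsf{A}\mathsf{D}^{-1}\mathsf{A}^T\mathbf{1}=(\mathsf{A}^T\mathbf{1})^T\mathsf{D}^{-1}(\mathsf{A}^T\mathbf{1})=\sum_{j=1}^{v}\frac{k_j^2}{k_j-\lambda_j},$$
since $\mathsf{D}^{-1}=\mathrm{diag}\big((k_1-\lambda_1)^{-1},\ldots,(k_v-\lambda_v)^{-1}\big)$. On the right-hand side, because $[x_ix_j]$ is the outer product of the vector $x=(x_1,\ldots,x_v)^T$ with itself, I get $\mathbf{1}^T(\mathsf{I}_v+t\,xx^T)\mathbf{1}=v+t\big(\sum_{i=1}^{v}x_i\big)^2$. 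Equating the two expressions yields $\sum_j k_j^2/(k_j-\lambda_j)-v=t\big(\sum_i x_i\big)^2$, which is exactly the first bracketed factor of the claimed identity, now expressed through $t$ and $\sum_i x_i$.

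It remains to identify $\sum_i x_i$ with the third bracket divided by $t$. Summing the defining relation $tx_i=\sum_{j=1}^{v}\frac{\sqrt{\lambda_j}}{k_j-\lambda_j}a_{ij}$ (Equation (\ref{txi_formula})) over all rows $i$ and interchanging the order of summation, I would use $\sum_i a_{ij}=k_j$ to obtain $t\sum_i x_i=\sum_{j=1}^{v}\frac{\sqrt{\lambda_j}}{k_j-\lambda_j}k_j$, so that $t\sum_i x_i$ equals precisely the quantity being squared on the right of the asserted identity. Substituting $\sum_i x_i=\big(\sum_j \sqrt{\lambda_j}k_j/(k_j-\lambda_j)\big)/t$ into $\sum_j k_j^2/(k_j-\lambda_j)-v=t\big(\sum_i x_i\big)^2$ and recalling that $t=1+\sum_j \lambda_j/(k_j-\lambda_j)$ gives the result after clearing the single surviving factor of $t$.

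The computation is short once the right contraction is chosen, so there is no serious obstacle; the only points requiring care are the bookkeeping of matrix dimensions, namely that the diagonal term $\mathbf{1}^T\mathsf{I}\mathbf{1}$ must contribute exactly $v$ (hence the restriction to the square case $m=v$ consistent with the statement of Corollary \ref{cor_1}), and the observation that the outer-product term factors as a perfect square $t\big(\sum_i x_i\big)^2$ rather than an uncontrolled quadratic form. Verifying that this square equals $\big(\sum_j\sqrt{\lambda_j}k_j/(k_j-\lambda_j)\big)^2/t$ through the summation identity $t\sum_i x_i=\sum_j\sqrt{\lambda_j}k_j/(k_j-\lambda_j)$ is the crux that turns the two contracted relations into the stated product identity.
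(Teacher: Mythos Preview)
The paper does not supply its own proof of this corollary; it is quoted, together with Corollary~\ref{cor_1}, as a result of Ryser \cite{Ry2}. Your argument---contracting the matrix identity $\mathsf{A}\mathsf{D}^{-1}\mathsf{A}^{T}=\mathsf{I}_v+t[x_ix_j]$ on both sides against the all-ones vector, and then identifying $t\sum_i x_i$ with $\sum_j \sqrt{\lambda_j}\,k_j/(k_j-\lambda_j)$ via the row-sum of \eqref{txi_formula}---is correct and is exactly the standard way this scalar identity is extracted.
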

\noindent
Note that if we set $\,k_1=k_2=\cdots=k_v=k\,\text{ and }\, \lambda_1=\lambda_2=\cdots=\lambda_v=\lambda$, 
then Equation (\ref{txi_formula}) reduces to $\,k-\lambda=k^2-kv\,$ for symmetric block design. Further we get a Ryser design if we set $\, \lambda_1=\lambda_2=\cdots=\lambda_v=\lambda\,$  with at least two different block sizes.\\
We state the following result from Miller \cite{miller} which will be used to prove Theorem \ref{Inverse of incidence matrix}.
\begin{thm}\label{thm:millerMatrixInverseThm}
	Let $\mathsf{G}$ and $\mathsf{H}$ be arbitrary square matrices of the same order. If $\mathsf{G}$ is non singular and $\mathsf{H}$ has rank one, then $(\mathsf{G}+\mathsf{H})^{-1}=\mathsf{G}^{-1}-\dfrac{1}{1+g}\mathsf{G}^{-1}\mathsf{H}\mathsf{G}^{-1}$, where $g=tr \mathsf{H}\mathsf{G}^{-1}$.
\end{thm}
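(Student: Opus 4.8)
The plan is to exploit the rank-one structure of $\mathsf{H}$ directly and then verify the proposed formula by a single multiplication check. Since $\mathsf{H}$ has rank one, I would first factor it as $\mathsf{H} = \mathbf{u}\mathbf{v}^T$ for suitable column vectors $\mathbf{u}, \mathbf{v}$; this is the standard consequence of rank one, namely that every column of $\mathsf{H}$ is a scalar multiple of one fixed vector $\mathbf{u}$. With this factorization the scalar $g$ becomes transparent: using the cyclic invariance of trace, $g = \mathrm{tr}(\mathsf{H}\mathsf{G}^{-1}) = \mathrm{tr}(\mathbf{u}\mathbf{v}^T\mathsf{G}^{-1}) = \mathbf{v}^T\mathsf{G}^{-1}\mathbf{u}$, a single number.

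The key algebraic fact that makes the whole computation collapse is the identity $\mathsf{H}\mathsf{G}^{-1}\mathsf{H} = g\mathsf{H}$. This follows immediately from the factorization, since $\mathsf{H}\mathsf{G}^{-1}\mathsf{H} = \mathbf{u}(\mathbf{v}^T\mathsf{G}^{-1}\mathbf{u})\mathbf{v}^T = g\,\mathbf{u}\mathbf{v}^T = g\mathsf{H}$. Multiplying on the right by $\mathsf{G}^{-1}$ gives the form I actually use, $\mathsf{H}\mathsf{G}^{-1}\mathsf{H}\mathsf{G}^{-1} = g\,\mathsf{H}\mathsf{G}^{-1}$, so that $\mathsf{H}\mathsf{G}^{-1}$ behaves like a scalar multiple of an idempotent.

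Denote the proposed inverse by $\mathsf{K} = \mathsf{G}^{-1} - \frac{1}{1+g}\mathsf{G}^{-1}\mathsf{H}\mathsf{G}^{-1}$. I would then simply expand $(\mathsf{G}+\mathsf{H})\mathsf{K}$. Distributing gives $\mathsf{I} + \mathsf{H}\mathsf{G}^{-1} - \frac{1}{1+g}\big(\mathsf{H}\mathsf{G}^{-1} + \mathsf{H}\mathsf{G}^{-1}\mathsf{H}\mathsf{G}^{-1}\big)$, and applying the key identity replaces $\mathsf{H}\mathsf{G}^{-1}\mathsf{H}\mathsf{G}^{-1}$ by $g\,\mathsf{H}\mathsf{G}^{-1}$. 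The bracketed term becomes $(1+g)\mathsf{H}\mathsf{G}^{-1}$, which cancels against the $+\mathsf{H}\mathsf{G}^{-1}$ term, leaving exactly $\mathsf{I}$. Since $\mathsf{G}+\mathsf{H}$ and $\mathsf{K}$ are square of the same order, a right inverse is automatically a two-sided inverse, and hence $\mathsf{K} = (\mathsf{G}+\mathsf{H})^{-1}$.

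There is no serious obstacle here; the only point deserving care is the condition $1+g \neq 0$, which is implicit in the statement since the denominator must be nonzero. In fact the argument shows slightly more: $\mathsf{G}+\mathsf{H}$ is invertible precisely when $1+g \neq 0$, because $\det(\mathsf{G}+\mathsf{H}) = \det(\mathsf{G})\det(\mathsf{I} + \mathsf{G}^{-1}\mathsf{H}) = (1+g)\det(\mathsf{G})$ by the determinant identity for rank-one updates. I would flag this so that the formula is applied only in the regime where it is meaningful, which is exactly the regime in which $\mathsf{I_v}+\mathsf{R}$ is inverted in the proof of Theorem \ref{Inverse of incidence matrix}.
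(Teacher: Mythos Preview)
Your proof is correct. Note, however, that the paper does not supply its own proof of this theorem; it is quoted from Miller \cite{miller} and used as a black box in the proof of Theorem~\ref{Inverse of incidence matrix}, so there is no in-paper argument to compare against. Your approach---factoring $\mathsf{H}=\mathbf{u}\mathbf{v}^T$, deducing the key identity $\mathsf{H}\mathsf{G}^{-1}\mathsf{H}=g\mathsf{H}$ via the cyclic trace, and then verifying $(\mathsf{G}+\mathsf{H})\mathsf{K}=\mathsf{I}$ by direct expansion---is the standard Sherman--Morrison verification and is complete; your remark that the formula is valid precisely when $1+g\neq 0$, via $\det(\mathsf{G}+\mathsf{H})=(1+g)\det(\mathsf{G})$, is a useful addition that the paper leaves implicit.
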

\begin{proof}[\textit{Proof of Theorem \ref{Inverse of incidence matrix}:}]
	By definition of a Ryser design we know that $\mathsf{A}^T\mathsf{A}= \mathsf{D}+\lambda \mathsf{J_v},  \text{ where }  \mathsf{D}$ is as defined in Equation (\ref{MatrixD}) and  $\mathsf{J_v}$ is all 1 matrix.
	Then by simple manipulations we get 
	{\scriptsize\small\begin{align*}
		det(\mathsf{A}^T\mathsf{A})&=\left[1+\lambda\left(\dfrac{1}{(k_1-\lambda)} +\cdots +\dfrac{1}{(k_v-\lambda)}   \right)\right](k_1-\lambda)\cdots(k_v-\lambda)\\
		&= \left[1+\lambda\sum\limits_{j=1}^v\dfrac{1}{(k_j-\lambda)} \right]\prod\limits_{i=1}^v(k_i-\lambda)\\
		\intertext{Equation (\ref{sum_of_all}) implies,}
		det(\mathsf{A}^T\mathsf{A})&= \left[1+\lambda\left(\dfrac{(\rho +1)^2}{\rho}-\dfrac{1}{\lambda}\right) \right]\prod\limits_{i=1}^v(k_i-\lambda)\\
		&= \left[\lambda\dfrac{(\rho +1)^2}{\rho} \right] \prod\limits_{i=1}^v(k_i-\lambda)\neq 0.
		\end{align*}}
	Hence$\,\,\, \mathsf{A}\,\,\,$is invertible.\\
	\noindent
	By Lemma \ref{lemma:AD^-1A^T} we have $\mathsf{A}\mathsf{D}^{-1}\mathsf{A}^T=\mathsf{I_v}+\mathsf{R} $, where $\mathsf{R}$ is as defined in Equation (\ref{MatrixR}).  
	As $\mathsf{A}\mathsf{D}^{-1}\mathsf{A}^T$ is invertible, (since $\mathsf{A} $ and $\mathsf{D} $ are) so is $\mathsf{I_v}+\mathsf{R}$. Also note that $\mathsf{R}$ is symmetric and has rank one.
	Now, $~(\mathsf{A}\mathsf{D}^{-1}\mathsf{A}^T)^{-1}=(\mathsf{I_v}+\mathsf{R})^{-1} \,$ gives $\,(\mathsf{A}^T)^{-1}\mathsf{D}\mathsf{A}^{-1}=(\mathsf{I_v}+\mathsf{R})^{-1}$ and hence $\mathsf{A}^{-1}=\mathsf{D}^{-1}\mathsf{A}^T(\mathsf{I_v}+\mathsf{R})^{-1}$. 
	Use of Theorem \ref{thm:millerMatrixInverseThm} will now be made to obtain the inverse of $\mathsf{I}_v+\mathsf{R} $. \\\\
	The trace of $\mathsf{R}$ is easily seen to be $~e_1\rho+\dfrac{e_2}{\rho}$. Therefore,	
	\begin{align*}	
	(\mathsf{I}_v+\mathsf{R})^{-1}&=\mathsf{I}_v - \dfrac{1}{1+e_1\rho+\dfrac{e_2}{\rho}}\mathsf{R}\\
	&=\mathsf{I}_v - \dfrac{\rho}{\rho + \rho^2 e_1+e_2}\mathsf{R}.
	\intertext{By Equation (\ref{e1rhopluse2byrho_this_ch})}
	1 +\rho e_1+\dfrac{e_2}{\rho}&= \lambda\dfrac{(\rho +1)^2}{\rho}.\\
	\intertext{Hence we have,} 
	(\mathsf{I}+\mathsf{R})^{-1}&=\mathsf{I}_v-\dfrac{\rho }{\lambda(\rho +1)^2}\mathsf{R} .
	\end{align*}
\end{proof}

\section{A necessary condition for a Ryser design to be of Type-1}
\noindent

\begin{proof}[\textit{Proof of Theorem \ref{thm:necessary condition}:}]
	Since $r=r_1-r_2$ we have $(\rho+1)/(\rho-1)=(v-1)/r $
	which implies
	\begin{equation}\label{rho}
	\rho= \dfrac{v-1+r}{v-1-r}
	\end{equation}
	Equations (\ref{e1Dform}) and (\ref{e2Dform}) imply
	$v=e_1+e_2=\lambda +\left(\dfrac{\lambda+D}{\rho}\right) + \lambda+[\lambda-(D+1)]\rho~~$ which on simplification gives
	$[\lambda-(D+1)]\rho^2-(v-2\lambda)\rho +(\lambda+D)=0$.
	Hence we get,
	\begin{equation}\label{root}
	\rho= \dfrac{(v-2\lambda)\pm\sqrt{(v-2\lambda)^2-4[\lambda-(D+1)](\lambda+D)}}{2[\lambda-(D+1)]}.
	\end{equation}
	Now, Equations (\ref{rho}) and (\ref{root}) imply\\
	$$\dfrac{(v-2\lambda)\pm\sqrt{(v-2\lambda)^2-4[\lambda-(D+1)](\lambda+D)}}{2[\lambda-(D+1)]}=\dfrac{v-1+r}{v-1-r} $$
	which on simplification gives,\\ 
	$ v^3-v^2[4\lambda+1]+v[8\lambda+4rD-(r-1)^2]-[4\lambda +4rD -(r-1)^2] = 0. $\\
	Let $f(v)=v^3-v^2[4\lambda+1]+v[8\lambda+4rD-(r-1)^2]-[4\lambda +4rD -(r-1)^2]$.\\ Then,
	$f(1)=1-[4\lambda+1]+[8\lambda+4rD-(r-1)^2]-[4\lambda +4rD -(r-1)^2]=0.$\\
	After factorization we get $f(v) = (v-1)[v^2-4v\lambda +4\lambda+4Dr-(r-1)^2]$.\\
	Since $\,\,v \neq 1 ~~\text{ and }~~ f(v)=0, ~~\text{ we have }~~ v^2-4v\lambda +4\lambda+4Dr-(r-1)^2=0$.\\
	This implies $~~ v= 2\lambda\pm\sqrt{(2\lambda-1)^2+(r-1)^2-4Dr-1}$.\\
	By Theorem \ref{thm:D=0D=-1} a Ryser design is of Type-1 if and only if $D=0~~ \text{ or } D=-1$.
	Now $D=0 ~~\text{ gives }~~ v= 2\lambda\pm\sqrt{(2\lambda-1)^2+(r-1)^2-1}$,  then $\{(2\lambda-1)^2+r(r-2)\}$ is a perfect square.
	If there exists a Ryser design of Type-1 with $D=-1$, then we get
	$v=2\lambda\pm\sqrt{(2\lambda-1)^2+(r-1)^2+4r-1}$ which implies $\{(2\lambda-1)^2+r(r+2)\}$ is a perfect square.
\end{proof}
\begin{corollary}
	Let $\mathcal{D}$ be a Ryser design of order $v$ and index $\lambda$. Then $v\geq 4\lambda-1$ if and only if $e_2-e_1\geq2D+1$.
\end{corollary}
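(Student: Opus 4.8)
The plan is to reuse the polynomial identity obtained in the course of the proof of Theorem~\ref{thm:necessary condition}. The factorization $f(v)=(v-1)\bigl[v^2-4\lambda v+4\lambda+4Dr-(r-1)^2\bigr]$ was derived there using only Equations~(\ref{e1Dform}), (\ref{e2Dform}) and the identity $(\rho+1)/(\rho-1)=(v-1)/r$, none of which presupposes Type-1. Hence for an arbitrary Ryser design of order $v$ and index $\lambda$, writing $r=r_1-r_2$, one has
\begin{equation*}
v^2-4\lambda v+4\lambda+4Dr-(r-1)^2=0,
\end{equation*}
equivalently $4\lambda(v-1)=v^2+4Dr-(r-1)^2$.

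First I would turn the inequality $v\geq 4\lambda-1$ into a statement about $r$ and $D$. Since $v>1$ (indeed $v=e_1+e_2\geq 2$), the inequality $v\geq 4\lambda-1$ is equivalent to $4\lambda\leq v+1$, and multiplying through by $v-1>0$ and using the identity above this becomes $v^2+4Dr-(r-1)^2\leq (v+1)(v-1)=v^2-1$. This reduces to $4Dr\leq (r-1)^2-1=r(r-2)$, and dividing by $r=r_1-r_2>0$ (recall $r_1>r_2$) yields the equivalent condition $r\geq 4D+2$.

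Next I would convert $r\geq 4D+2$ into the asserted inequality in $e_1,e_2$. Adding the two defining relations $D=e_1-r_2$ and $D=r_1-e_2-1$ gives $2D=(e_1-e_2)+(r_1-r_2)-1=(e_1-e_2)+r-1$, hence $e_2-e_1=r-2D-1$. Therefore $r\geq 4D+2$ holds precisely when $r-2D-1\geq 2D+1$, that is, when $e_2-e_1\geq 2D+1$; together with the previous paragraph this is exactly the claimed equivalence.

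I expect no serious obstacle. The two points that need care are: (i) confirming that the quadratic relation displayed above is valid for \emph{every} Ryser design and not merely for those of Type-1 — which it is, since in the proof of Theorem~\ref{thm:necessary condition} the factorization of $f(v)$ is obtained before the hypothesis $D\in\{0,-1\}$ is ever invoked; and (ii) keeping the direction of the inequalities straight when clearing the positive denominators $v-1$ and $r$.
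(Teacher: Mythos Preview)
Your argument is correct and follows essentially the same route as the paper: both exploit the quadratic relation $v^2-4\lambda v+4\lambda+4Dr-(r-1)^2=0$ (valid for every Ryser design) to reduce $v\geq 4\lambda-1$ to $r\geq 4D+2$, and then translate this via $e_2-e_1=r-2D-1$. The only cosmetic difference is that the paper rewrites the relation as the factored identity $(v-4\lambda+1)(v-1)=r(r-2-4D)$ before reading off the equivalence, whereas you pass through $4\lambda(v-1)=v^2+4Dr-(r-1)^2$; these are the same computation.
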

\begin{proof}[Proof:]
	We know that $v= 2\lambda\pm\sqrt{(2\lambda-1)^2+(r-1)^2-4Dr-1}$, where $r=r_1-r_2$. This on simplification gives
	$(v-4\lambda+1)^2(v-1)=r(r-2-4D)$. Now, $v\geq4\lambda-1$ if and only if $r-2-4D\geq0$ if and only if $e_2-e_1\geq2D+1$.
\end{proof}

\begin{proposition}
	Let $\mathcal{D}$ be a Ryser design of order $v$ and index $\lambda$. Let $A$ be a large block and $B$ be a small block of $\mathcal{D}$. Then $ \tau_1(A)-1 \geq D\geq-\tau_2(B) $. In general $\lambda-1 > D>-\lambda$.
\end{proposition}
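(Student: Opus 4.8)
The plan is to push everything through the integer parameters attached to a single block (the $t$ of Lemma~\ref{lemma:blocksize}) and to the design (the $x,y$ introduced via Equation~(\ref{e2lambdaxcform}) and its analogue $e_1=\lambda+yd$), after which both inequalities collapse to the trivial containments $\tau_2(A)\le e_2$ and $\tau_1(B)\le e_1$.

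First I would record two closed forms for $D$. Using $D=e_1-r_2=r_1-e_2-1$ and substituting $e_2=\lambda+xc$, $r_1=2\lambda+xa$, $e_1=\lambda+yd$, $r_2=2\lambda-ya$ together with $a=c-d$ (so $a-c=-d$ and $a+d=c$), one obtains
\[
D=\lambda-1-xd=yc-\lambda .
\]
Now let $A$ be a large block, $|A|=2\lambda+ta$ with $t>0$; Lemma~\ref{lemma:blocksize} gives $\tau_1(A)=\lambda-td$, $\tau_2(A)=\lambda+tc$, and $\tau_1(A)<\lambda$. Since $E_2\cap A\subseteq E_2$ we have $\tau_2(A)\le e_2$, i.e.\ $\lambda+tc\le\lambda+xc$, whence $t\le x$ (as $c\ge1$); multiplying by $d\ge1$ and inserting the first closed form of $D$ gives $D=\lambda-1-xd\le\lambda-1-td=\tau_1(A)-1$. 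Symmetrically, for a small block $B$, $|B|=2\lambda+ta$ with $t<0$, Lemma~\ref{lemma:blocksize} gives $\tau_1(B)=\lambda-td>\lambda$ and $\tau_2(B)=\lambda+tc$; from $E_1\cap B\subseteq E_1$ we get $\tau_1(B)\le e_1$, i.e.\ $\lambda-td\le\lambda+yd$, so $t\ge-y$, and multiplying by $c\ge1$ and adding $\lambda$ yields $\tau_2(B)=\lambda+tc\ge\lambda-yc=-D$, that is $D\ge-\tau_2(B)$. (Alternatively one avoids $x,y$ altogether: expressing $\tau_1(A)$ from $|A|=2\lambda+ta$ and $\tau_2(A)=\lambda+tc$ and using Equation~(\ref{r1form}) one computes $r_1-\tau_1(A)=\bigl(a\,e_2+d\,\tau_2(A)\bigr)/c\le(a+d)e_2/c=e_2$, i.e.\ $\tau_1(A)+e_2\ge r_1$, which is exactly $\tau_1(A)-1\ge D$; Equation~(\ref{r2form}) handles the small block the same way.)

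Finally the general bounds follow at once: since $A$ is large, $\tau_1(A)\le\lambda-1$, so $D\le\tau_1(A)-1\le\lambda-2<\lambda-1$; since $B$ is small, $\tau_2(B)\le\lambda-1$, so $D\ge-\tau_2(B)\ge1-\lambda>-\lambda$. I do not expect a serious obstacle here: the only genuine input beyond the equations already displayed in the paper is the obvious fact that a block cannot contain more $E_i$-points than $E_i$ itself has, and the only thing to watch is the direction of the inequalities $t\le x$ and $t\ge-y$ when they are multiplied through by the positive integers $c$ and $d$ and when the closed forms for $D$ are substituted.
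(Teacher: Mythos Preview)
Your proof is correct and rests on exactly the same inequality as the paper's---the trivial containment $\tau_i\le e_i$---but the paper gets there in one step: combining Equations~(\ref{e1form}) and~(\ref{tau1rhotau2form}) gives the identity $\rho\bigl(e_1-\tau_1(B)\bigr)=\tau_2(B)+D$, and dually Equations~(\ref{e2form}) and~(\ref{tau1rhotau2form}) give $\rho\bigl(\tau_1(A)-D-1\bigr)=e_2-\tau_2(A)$, so both bounds drop out for \emph{every} block without separating large from small. Your main route through the integer parameters $c,d,t,x,y$ and the closed forms $D=\lambda-1-xd=yc-\lambda$ is an equivalent but slightly longer detour; your parenthetical ``alternative'' is precisely the paper's identity after clearing the denominator $\rho=c/d$.
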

\begin{proof}[Proof:]
	Let $B$ be any block with $|B|=\tau_1(B)+\tau_2(B)$. By Equations (\ref{e1form}) and (\ref{tau1rhotau2form}), we get
	$\rho(e_1-\tau_1(B))=\tau_2(B)+D$
	which implies $\tau_2(B)+D \geq 0$.
	In particular, if $B$ is a small block, then $D\geq-\tau_2(B)$. 
	Let $A$ be any block with $|A|=\tau_1(A)+\tau_2(A)$. By Equations (\ref{e2form}) and (\ref{tau1rhotau2form}), we get
	$\rho [\tau_1 -(D+1)] = e_2-\tau_2$ which implies $\tau_1(A)-(D+1) \geq 0$. In particular, if $A$ is a large block, then $\tau_1(A)-1\geq D$.
	By Lemma \ref{lemma:blocksize} we have $\lambda >\tau_1(A)$ and $\lambda>\tau_2(B)$ we get,\\
	\begin{equation}\label{bound_on_D_interms_of_lambda}
	\lambda-1 > D>-\lambda.
	\end{equation}
	This completes the proof.
\end{proof}

\begin{proof}[\textit{Proof of Theorem \ref{thm:result3}:}]
	By Theorem \ref{thm:necessary condition}$~~v= 2\lambda\pm\sqrt{(2\lambda-1)^2+(r-1)^2-4Dr-1}.$ This implies
	$(v-2\lambda)^2-(2\lambda-1)^2 = (r-1)^2-4Dr-1. \text{ If } D\leq -1\text{, then } (r-1)^2-4Dr-1 \geq 0\,\,\text{ which implies } (v-2\lambda)^2-(2\lambda-1)^2\geq 0 \text{ and hence } v\geq4\lambda-1 $. Thus $D\leq -1$ implies $v\geq 4\lambda-1$. Using Theorem \ref{thm:rho<lambda}$~~$ and Equation (\ref{root}),  $$\lambda \geq   \dfrac{(v-2\lambda)+\sqrt{(v-2\lambda)^2-4[\lambda-(D+1)](\lambda+D)}}{2[\lambda-(D+1)]}.$$
	By Equation (\ref{bound_on_D_interms_of_lambda}) we have $\,\lambda-1 >D$. If \,\,$D>0\,\,$, then we get 
	$$ 2\lambda[\lambda-(D+1)] \geq (v-2\lambda)+\sqrt{(v-2\lambda)^2-4[\lambda-(D+1)](\lambda+D)}$$ which on simplification gives $\lambda^2+\lambda+1 -D[\lambda-(1/\lambda)] \geq v$. Now if $D=0$ we get $\lambda^2 +\lambda+1 \geq v$ and hence if $D\geq 0 \text{ we have }\lambda^2 +\lambda+1 \geq \lambda^2+\lambda+1 - D[\lambda-(1/\lambda)]\geq v$.
\end{proof}
\noindent

\section{Special Ryser designs with two block sizes}

\noindent

\begin{thm}\label{Thm:unique_block}
	Let $\mathcal{D}$ be a Ryser design of Type-2 of order $v$, index $\lambda$ and replication numbers $r_1$ and $r_2$. 
	\begin{enumerate}[(a)]
		\item If there exists a block $A$ of size $k=2\lambda+ta$, where $2tc+\lambda>e_1$ that is $t>x/2$, then $A$ is the unique block of size $k=2\lambda+sa$ with $s>x/2$.
		\item If there exists a block $B$ of size $k=2\lambda-ta$, where $2td+\lambda>e_2$ that is $t>y/2$, then $B$ is the unique block of size $k=2\lambda-sa$ with $s>y/2$.
	\end{enumerate}
\end{thm}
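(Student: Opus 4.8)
The plan is to prove both parts by a short Fisher-type double-counting argument carried out inside one of the two point classes $E_1,E_2$, using only the block-size parametrization of Lemma \ref{lemma:blocksize} together with the identities $e_2=\lambda+xc$ (Equation (\ref{e2lambdaxcform})) and $e_1=\lambda+yd$. I would prove (a) in detail and then note that (b) is the exact mirror image.

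First I would translate the hypothesis of (a) into a statement about $\tau_2$. By Lemma \ref{lemma:blocksize}, any block $C$ with $|C|=2\lambda+sa$ has $\tau_2(C)=\lambda+sc$, and $c>0$ (recall $\rho=c/d>1$). The condition $s>x/2$ then gives $2sc>xc$, hence $2\tau_2(C)=2\lambda+2sc>2\lambda+xc=\lambda+e_2$, i.e.
\[
\tau_2(C)>\frac{e_2+\lambda}{2}
\]
for every block of this form. Next I would suppose, for contradiction, that $A$ and $A'$ are two distinct blocks of $\mathcal{D}$ each of size $2\lambda+sa$ for some $s>x/2$ (the value of $s$ need not be the same for the two blocks). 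Applying the displayed inequality to both and invoking the elementary bound $|S\cap T|\ge |S|+|T|-|U|$ for subsets $S,T$ of a finite set $U$, with $U=E_2$, $S=A\cap E_2$, $T=A'\cap E_2$, I obtain
\[
|A\cap A'\cap E_2|\ \ge\ \tau_2(A)+\tau_2(A')-e_2\ >\ (e_2+\lambda)-e_2\ =\ \lambda .
\]
Since $A\cap A'\cap E_2\subseteq A\cap A'$ and $|A\cap A'|=\lambda$ (two distinct blocks of a Ryser design meet in exactly $\lambda$ points), this is impossible. Hence $\mathcal{D}$ has at most one block of size $2\lambda+sa$ with $s>x/2$; as the given $A$ is one such block, it is the unique one. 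For (b) I would run the identical argument inside $E_1$: a small block $C$ of size $2\lambda-sa$ has $\tau_1(C)=\lambda+sd$ by Lemma \ref{lemma:blocksize}, the hypothesis $s>y/2$ together with $e_1=\lambda+yd$ gives $\tau_1(C)>(e_1+\lambda)/2$, and two such distinct blocks $B,B'$ would satisfy $|B\cap B'\cap E_1|>\lambda$, once again contradicting $|B\cap B'|=\lambda$.

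There is no real obstacle here: the entire proof is a single inclusion--exclusion inequality, and the only thing requiring attention is the bookkeeping that converts the numerical hypothesis ``$t>x/2$'' (resp.\ ``$t>y/2$'') into the clean geometric statement ``more than half of $E_2$ (resp.\ $E_1$), by a margin of $\lambda$, lies in the block,'' keeping track of the strict inequality throughout. I would also remark in passing that the Type-2 assumption in the statement is not actually used in this counting argument; it is retained only because Theorem \ref{Thm:unique_block} is invoked in that form in the proof of Theorem \ref{thm:result4}, where one assumes $\mathcal{D}$ is of Type-2 and then derives a contradiction with Theorem \ref{Thm:Type-1_iff_2columnsum_1occurs_exactly_once}.
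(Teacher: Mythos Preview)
Your proof is correct and is genuinely different from the paper's argument. The paper proves (a) by block complementation: it takes a putative second block $A'$ of size $2\lambda+sa$ with the smallest admissible $s>x/2$, forms $\overline{\mathcal D}=\mathcal D*A'$, and uses Proposition~\ref{prop:complement-properties} (this is where the Type-2 hypothesis enters) together with $\overline{e_2}=\overline{\lambda}+\overline{x}c$ and $\overline{x}=x-2s<0$ to conclude that $\overline{\mathcal D}$ can have no large or average blocks; but the image of $A$ in $\overline{\mathcal D}$ is large or average, a contradiction. Your route is a one-line inclusion--exclusion inside $E_2$: the hypothesis $s>x/2$ is exactly the statement $\tau_2>(e_2+\lambda)/2$, so two such blocks would meet in more than $\lambda$ points already within $E_2$. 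This is shorter, avoids the complementation machinery entirely, and, as you observe, does not require the Type-2 assumption at all; the paper's proof genuinely uses Type-2 through Proposition~\ref{prop:complement-properties}, so your argument is strictly more general. The paper's approach, on the other hand, illustrates how the parameters transform under complementation, which is of independent interest and is the viewpoint used elsewhere in the subject.
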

\begin{proof}[Proof:]
	We give a proof of (a). The proof of (b) is similar.
	By Proposition \ref{prop:complement-properties} we know that, if we complement $\mathcal{D}$ with respect to a block $B$ with $|B|=k$, then we get a new Ryser design $\overline{\mathcal{D}}$, with index $\overline{\lambda} =k-\lambda$ and the same replication numbers.
	Then $r_1=2\lambda+xa=2\overline{\lambda}+\overline{x}a$ implies $ 2(2\lambda-k)=(\overline{x}-x)a $.
	Now, if $k>2\lambda$, then $k=2\lambda+ta$ which gives us
	$\overline{x}=x-2t$.
	Therefore $\overline{x}>0$ if and only if $t<x/2$.
	By Equation (\ref{e2lambdaxcform})
	we have $\overline{e_2}=\overline{\lambda}+\overline{x}c$.
	Hence if $\overline{e_2}>\overline{\lambda}$, then $\overline{x}>0$.\\
	Let $A$ be a block of size $k=2\lambda+ta$, where $2tc+\lambda>e_1$ that is $t>x/2$. Let $A'$ be a block of size $k'=2\lambda+sa$ with $s>x/2$. Then we claim that $A'=A$.
	We can then choose $A'$ so that $s$ is the smallest with that property.
	Then in the new design $\overline{\mathcal{D}}$ obtained by complementing $\mathcal{D}$ with respect to the block $A'$ we have $\overline{e_2}<\overline{\lambda}$ and hence it can not have large or average blocks. But blocks of size $\geq k'$ (in $\mathcal{D}$) become large or average blocks in $\overline{\mathcal{D}}$. In particular, $A$ becomes average or large in $\overline{\mathcal{D}}$ which is a contradiction. This contradiction proves that $A'=A$. Hence the claim.
\end{proof}

\begin{proof}[\textit{Proof of Theorem \ref{thm:result4}:}]
	Clearly we can not have $k_1=2\lambda+t_1a$ with $2t_1>x$ and  $k_2=2\lambda-t_2a$ with $2t_2>y$ for in that case  by Theorem \ref{Thm:unique_block} the design will have only two blocks. Hence precisely one of (a) or (b) must occur. Without loss of generality let $\mathcal{D}$ be a Ryser design with two block sizes $k_1>k_2$, where $k_1=2\lambda+t_1a$ with $2t_1>x$.
	Then by Theorem \ref{Thm:unique_block} $\mathcal{D}$ has a unique block of size $k_1$ and hence all the remaining $v-1$ blocks are of size $k_2$. Now by Theorem \ref*{Thm:Type-1_iff_2columnsum_1occurs_exactly_once} $\mathcal{D}$ is of Type-1. The other case is similar.
\end{proof}

%
\end{document}